%%%%%%%%%%%%%%%%%%%%%%%%%%%%%%%%%%%%%%%%%%%%%%%%%%%%%%%%
\documentclass[11pt,reqno]{amsart}
\setlength{\textheight}{23cm}
\setlength{\textwidth}{16cm}
\setlength{\topmargin}{-0.8cm}
\setlength{\parskip}{0.3\baselineskip}
\hoffset=-1.4cm

\usepackage{diagrams}
\usepackage{amssymb}

\theoremstyle{definition}
\newtheorem{lm}{Lemma}[section]
\newtheorem{thm}[lm]{Theorem}

\newtheorem{lem}[lm]{Lemma}
\newtheorem{prop}[lm]{Proposition}

\newtheorem{defi}[lm]{Definition}

\newtheorem{rem}[lm]{Remark}
\newtheorem{note}[lm]{Notation}

\theoremstyle{remark}

\newcommand{\mc}[1]{\mathcal{#1}}
\newcommand{\mr}[1]{\mathrm{#1}}
\newcommand{\ov}[1]{\overline{#1}}
\newcommand{\ul}[1]{\underline{#1}}
\newcommand{\mf}[1]{\mathfrak{#1}}

\DeclareMathOperator{\msp}{\mathrm{Spec}}

\DeclareMathOperator{\Hc}{\mathcal{H}om}

\DeclareMathOperator{\tx}{\tilde{Y}}

\DeclareMathOperator{\Spec}{\mathrm{Spec}}
\DeclareMathOperator{\pr}{\mathrm{pr}}

\DeclareMathOperator{\mo}{\mathcal{O}}

\DeclareMathOperator{\red}{\mathrm{red}}
%o

%opening

\begin{document}

\title[Existence of semistable vector bundles with fixed determinants]{Existence of semistable vector bundles with fixed determinants}

\author[I. Kaur]{Inder Kaur}

\address{ Instituto Nacional de Matem\'{a}tica Pura e Aplicada, Estr. Dona Castorina, 110 - Jardim Bot\^{a}nico, Rio de Janeiro - RJ, 22460-320,  Brazil
}

\email{inder@impa.br}

\subjclass[2010]{Primary $14$D$20$, $14$J$60$, Secondary $14$L$24$, $14$D$22$}

\keywords{Moduli spaces, Semistable vector bundles, Tree-like curves, Fibred surfaces}

\date{\today}

\begin{abstract}
Let $R$ be an excellent Henselian discrete valuation ring with algebraically closed residue field $k$ of any characteristic. Fix integers $r,d$ with $r\ge2$.
Let $X_R$ be a regular fibred surface over $\msp(R)$ with special fibre denoted $X_k$, a  generalised tree-like curve of genus $g \ge 2$. 
Let $\mc{L}_R$ be a line bundle on $X_R$ of degree $d$ such that the degree of the restriction of $\mc{L}_R$ on the rational components of $X_k$ is a multiple of $r$.
In this article we prove the existence of a rank $r$ locally free sheaf on $X_R$ of determinant $\mc{L}_R$ such that it is semistable on the fibres.
\end{abstract}

\maketitle

\section{Introduction}
It is well-known that the moduli space of semistable locally free sheaves of fixed rank and degree on a surface may be empty (see for example {\cite[Section $5.3$]{HL}}).
In this article we prove the existence of a semistable sheaf of fixed rank and determinant on a regular fibred surface satisfying some conditions.
Recall that a curve is called \emph{semistable} if it is reduced, connected, has only nodal singularities, 
all of whose irreducible components which are rational meet the other components in at least $2$ points.
We call a curve \emph{generalised tree-like}\footnote{In general, the irreducible components of a tree-like curve are smooth.
We use the term `generalised tree-like curve' to emphasise that the irreducible components may be singular.} if it is semistable 
and after ignoring the singularities of the individual components, the dual graph associated to the curve does not have any loops or cycles i.e. it is a tree.

Throughout this article we use the following notations.

\begin{note}\label{n}
Let $R$ be an excellent Henselian discrete valuation ring with algebraically closed residue field $k$ of any characteristic.
Fix integers $r,d$ with $r\ge2$. Let $X_R$ be a regular fibred surface over $\msp(R)$ with special fibre denoted $X_k$, a generalised
tree-like curve of genus $g \ge 2$. Let $\mc{L}_R$ be a line bundle on $X_R$ of degree $d$. 
Assume that the degree of the restriction of $\mc{L}_R$ on the rational components of $X_k$ is a multiple of $r$.
\end{note}   

In this article we prove the following:

\begin{thm}[see Theorem \ref{foundfk}]\label{thmin2}
Keep Notation \ref{n}.
There exists a rank $r$ locally free sheaf on $X_R$ with determinant $\mc{L}_R$ such that it is semistable on the fibres.
\end{thm}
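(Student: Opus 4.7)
The plan is to first construct, on the special fibre $X_k$, a rank $r$ locally free sheaf $F_k$ with determinant $\mc{L}_k := \mc{L}_R|_{X_k}$ that is semistable, and then to lift it to a rank $r$ locally free sheaf on $X_R$ with determinant $\mc{L}_R$. Semistability of the generic fibre will then be automatic, since semistability is an open condition in a flat family over $\Spec(R)$ and the semistable locus specialises to $F_k$.

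For the construction of $F_k$, I would exploit the generalised tree-like structure. Write $X_k = \bigcup_{i} C_i$ as the union of irreducible components, whose combinatorial dual graph is by hypothesis a tree. On each rational component $C_i$, the assumption that $r$ divides $\deg(\mc{L}_R|_{C_i})$ permits the choice of a semistable rank $r$ bundle with the prescribed determinant, for instance a direct sum of line bundles of equal degree on (the normalisation of) $C_i$. On the non-rational components, which have arithmetic genus at least one, the existence of semistable rank $r$ bundles of arbitrary fixed determinant is classical, and these components also carry enough freedom to absorb any residual degree discrepancy. Because the dual graph is a tree, one can traverse the components in order and glue the local bundles across each node by prescribing isomorphisms of the fibres at the node, without any cycle-consistency obstruction; this produces a locally free sheaf $F_k$ on $X_k$ whose global determinant agrees with $\mc{L}_k$.

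For the lifting, standard deformation theory with fixed determinant applies. The obstruction to extending a locally free sheaf with prescribed determinant along a square-zero thickening $X_R \otimes_R R/\mf{m}^n \hookrightarrow X_R \otimes_R R/\mf{m}^{n+1}$ lies in $H^2(X_k, \Hc(F_k,F_k)_0)$, the cohomology of the trace-free endomorphism sheaf. Since $X_k$ is one-dimensional this group vanishes, so a compatible inverse system of lifts exists and yields a formal locally free sheaf on the $\mf{m}$-adic completion of $X_R$ with determinant the completion of $\mc{L}_R$. Grothendieck's existence theorem then algebraises this formal sheaf, and the Henselian property of $R$ descends the algebraisation to an honest locally free sheaf on $X_R$ with determinant $\mc{L}_R$.

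The main obstacle is the construction step on $X_k$: one must apportion degrees across the components so that, after gluing, the determinant of the resulting locally free sheaf is exactly $\mc{L}_k$ and not merely isomorphic up to a twist by a component. The divisibility assumption on the rational components is precisely what permits the rank $r$ determinantal contribution to be realised there, and the flexibility on higher genus components absorbs the rest; the tree-like hypothesis is essential, since cycles in the dual graph would create non-trivial monodromy constraints on the glued determinant that could obstruct the matching. Once $F_k$ is in hand, the rest of the argument is cohomological boilerplate made possible by the fact that $\dim X_k = 1$.
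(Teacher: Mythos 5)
Your overall architecture matches the paper's: build a semistable sheaf with determinant $\mc{L}_k$ on the special fibre, lift it formally with the determinant controlled (the obstruction being an $H^2$ on a curve, hence zero), algebraise, and descend. But there is a genuine gap at the heart of the special-fibre step: you assert that gluing componentwise-semistable bundles along the tree produces a \emph{semistable} sheaf on $X_k$, and this does not follow. On a reducible nodal curve, Seshadri or Gieseker semistability is a global condition involving inequalities on $\chi(\mc{F}\otimes\mo_Y)$ for all subcurves $Y$, and a locally free sheaf whose restriction to every component is semistable can easily violate them (the degrees of $\mc{L}_k$ on the components are forced on you, so you cannot ``apportion'' them to balance the slopes). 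This is precisely what the paper's Theorem \ref{existvbdettl} has to work for: it invokes Teixidor i Bigas's sufficient criterion ($\lambda$-semistability, Definition \ref{lsemis} and Theorem \ref{bh24}) and verifies the inequalities by recursion down the tree ordering of Lemma \ref{bh26}, twisting by divisors $\mo_{X_R}(\sum a_iY_i)$ supported on components (using Lemma \ref{bh27} that such twists preserve the relevant Euler characteristics) and finally untwisting, which does not affect Gieseker semistability. This is the main content of the construction on $X_k$ and is entirely absent from your argument; the tree hypothesis is used here, not only to avoid monodromy in the gluing of determinants.

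Two smaller points. For an irreducible \emph{nodal} component, the existence of a semistable bundle with prescribed determinant in arbitrary characteristic is not classical: the paper goes through generalised parabolic bundles on the normalisation (Theorem \ref{existvb}) and then corrects the determinant using surjectivity of $r$-th powers on the $k^{*}$-part of $\mr{Pic}$ of the nodal curve. And your final descent ``by the Henselian property'' is too quick: the paper applies Artin approximation (this is where excellence of $R$ enters) to the Quot functor, and Artin approximation only matches the sheaf with the formal one modulo a power of $\mf{m}$, so a further argument --- injectivity of $\mr{Pic}(X_R)\to\varprojlim\mr{Pic}(X_n)$, Lemma \ref{in07} --- is needed to conclude that the determinant is exactly $\mc{L}_R$ rather than merely congruent to it.
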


It should be noted that if the fibred surface in Notation \ref{n} is in fact smooth then one can show that the moduli space 
of rank $r$ stable torsion-free sheaves with fixed determinant on $X_R$ is smooth (see \cite{ink3}) and 
Theorem \ref{thmin2} follows easily using the Henselian property. 
However if the special fibre is singular,
then we cannot even define a moduli space of semistable sheaves with fixed determinant.
This is because a locally-free resolution of a coherent sheaf on a singular variety is not necessarily finite. 
So, the usual definition of determinant does not extend to coherent sheaves over singular varieties. 
One could define the determinant to be the top wedge product of the sheaf, but in that case the determinant could have torsion. 
In \cite{B1}, Bhosle defines determinant of a torsion-free sheaf on an irreducible 
nodal curve as the top wedge product modulo torsion. However, she points out in \cite[Remark $4.8$]{B1} that it is not possible
to define a determinant morphism, using this definition, on the entire moduli space of semi-stable 
sheaves due to obvious ambiguity of degree. 
It may be possible to prove a version of Theorem \ref{thmin2} using degeneration of moduli spaces of semistable sheaves of
fixed rank and degree (without fixing determinant) as studied in \cite{nagsesh},\cite{sch}. 
However in all of these articles $R$ is a $k$-algebra. 
When $R$ is of mixed characteristic, these techniques fail (see for example {\cite[Proposition $8$]{nagsesh}}). 
In this article we circumvent these problems by using a direct approach in which we do not need the existence 
of a moduli space of semistable locally free sheaves with fixed determinant and 
instead we use the existence of semistable locally free sheaves with fixed determinant on singular curves.   

\vspace{0.2 cm}
More precisely, our strategy to prove Theorem \ref{thmin2} is to first prove the existence of a semistable locally free sheaf on the special fibre $X_k$
of rank $r$ with determinant $\mc{L}_k := \mc{L}_R \otimes_{R}k$. 
For this we prove the existence of rank $r$ semistable locally free sheaves with fixed determinant on each of the irreducible components of $X_{k}$.
By assumption $X_{k}$ is a semistable curve of genus $g\geq 2$.
Therefore a component of $X_k$, say $Y$ is either a rational curve, an elliptic curve, a smooth or an irreducible nodal curve of genus $g \geq 2$. 
For $Y$ a rational curve, the existence of a semistable locally free sheaf of fixed rank and determinant follows from the assumption in Notation $\ref{n}$. 
In the case $Y$ is a curve of genus $1$, the existence of a locally free sheaf with fixed rank and determinant follows from {\cite[Theorem $3$]{Tu}}.
In {\cite[Theorem $2.4.6$]{IK}}, we replace certain steps (which fail in positive characteristic) in the proof of \cite[Theorem $8.6.1$]{po} to show the existence of a semistable sheaf of rank $r$ and degree $d$ on a smooth curve $Y$ of genus $g\geq 2$ over an algebraically closed field of any characteristic. 
Then using the fact that $\mr{Pic}^d(Y)$ is an abelian variety and that multiplication by rank is an isogeny one concludes that there in fact exists a semistable locally free sheaf of rank $r$ and fixed determinant.
We use the theory of generalised parabolic bundles and the case of the smooth curve in arbitrary characteristic to prove the existence of a semistable locally free sheaf with fixed rank and determinant for an irreducible nodal curve over an algebraically closed field of arbitrary characteristic.
Finally we prove that the glued-up sheaf satisfies the conditions for semistability of locally free sheaves on tree-like curves introduced in  \cite{teix1}.

\begin{thm}[see Theorem \ref{existvbdettl}]\label{thmin1}
Keep Notations \ref{n}.
There exists a Gieseker semistable locally free sheaf $\mc{F}_k$ of rank $r$ on $X_k$ with determinant $\mc{L}_k$.
\end{thm}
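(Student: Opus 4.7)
The plan is to construct $\mc{F}_k$ component by component and then glue across the nodes, finally verifying Gieseker semistability via the tree-like criterion of \cite{teix1}. Since $X_k$ is a generalised tree-like semistable curve of genus $\ge 2$, each irreducible component $Y$ is either smooth rational, of arithmetic genus $1$ (smooth or irreducible nodal), or of arithmetic genus $\ge 2$ (smooth or irreducible nodal). First I would produce on each $Y$ a rank $r$ semistable locally free sheaf $\mc{F}_Y$ with determinant exactly $\mc{L}_k|_Y$: for $Y$ rational, the divisibility hypothesis $r\mid \deg(\mc{L}_k|_Y)=rd_Y$ lets me take $\mc{O}_Y(d_Y)^{\oplus r}$; for $Y$ of genus $1$ this is \cite[Theorem $3$]{Tu}; for $Y$ smooth of genus $\ge 2$ I combine \cite[Theorem $2.4.6$]{IK} with the fact that multiplication-by-$r$ is a surjective isogeny on $\mr{Pic}^0(Y)$, which allows me to twist any given rank $r$ semistable bundle by a suitable line bundle so that its determinant becomes $\mc{L}_k|_Y$; and for $Y$ irreducible nodal of genus $\ge 2$ I pass to the normalisation $\tx \to Y$ and construct a semistable rank $r$ generalised parabolic bundle there whose parabolic data at the preimages of the nodes is tuned so that its descent along $\tx \to Y$ is locally free with the prescribed determinant, reducing the existence problem to the smooth case on $\tx$.

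Next I would glue. Because the dual graph of $X_k$ is a tree I can proceed inductively, removing one leaf component at a time. At a node $p \in Y \cap Y'$, gluing $\mc{F}_Y$ and $\mc{F}_{Y'}$ amounts to choosing a $k$-linear isomorphism of the $r$-dimensional fibres $\mc{F}_Y|_p \cong \mc{F}_{Y'}|_p$, a $\mr{GL}_r(k)$-torsor of choices, and the determinant of the resulting sheaf at $p$ depends on the gluing matrix only through its determinant in $k^\times$. On the other hand any line bundle on $X_k$ is determined by its restrictions to the components together with gluing data in $(k^\times)^{\#\text{nodes}}$. Having already arranged $\det \mc{F}_Y \cong \mc{L}_k|_Y$ componentwise in the first step, I then choose the gluing scalars at each node to reproduce the gluing data of $\mc{L}_k$, producing a locally free sheaf $\mc{F}_k$ on $X_k$ with $\det \mc{F}_k \cong \mc{L}_k$.

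Finally, I would verify Gieseker semistability using the criterion of \cite{teix1}. On a tree-like curve every connected proper subcurve $Z \subset X_k$ is a union of full components corresponding to a subtree of the dual graph, so the criterion reduces to a finite list of numerical inequalities comparing the slope of $\mc{F}_k|_Z$ to the global slope, corrected by a term involving $\#(Z \cap \ov{X_k\setminus Z})$. Because each restriction $\mc{F}_k|_{Y}$ is semistable with slope dictated by $\mc{L}_k|_Y$, and because the degrees on rational components are controlled by the divisibility hypothesis of Notation \ref{n}, these inequalities reduce to a manageable bookkeeping check that is compatible with the choices made in the first step.

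The main obstacle I expect is the irreducible nodal case in the component-wise step. The notion of determinant for torsion-free sheaves on a nodal curve is notoriously subtle (cf.\ the discussion around \cite[Remark $4.8$]{B1}), so the generalised parabolic data on $\tx$ must be chosen so that the descent along $\tx \to Y$ produces a genuinely \emph{locally free} sheaf whose top wedge is exactly the prescribed line bundle on $Y$, not merely a torsion-free sheaf of the correct top wedge modulo torsion. A secondary difficulty is aligning all the Teixidor inequalities simultaneously with the node-by-node matching of the global determinant, though the tree structure of the dual graph makes this a finite and local check.
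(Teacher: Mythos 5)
Your componentwise construction and gluing follow the same route as the paper, but the last step --- verifying the Teixidor inequalities directly for the glued sheaf --- is where the argument genuinely breaks down, and the missing idea is the paper's twist--untwist device. The criterion of Definition \ref{lsemis} requires, for every subtree $G(i)$ of the dual graph, that the partial sum $\sum_{Y_j\in G(i)}\chi(\mc{F}_k\otimes\mo_{Y_j})$ lie in a window of width exactly $r$ determined by $\lambda$ and $\chi(\mc{F}_k)$. These partial sums are forced by the multidegree of $\mc{L}_k$, which is given data: once you insist that $\det(\mc{F}_k|_{Y_j})\cong\mc{L}_k|_{Y_j}$ on every component, each $\chi(\mc{F}_k\otimes\mo_{Y_j})=\deg(\mc{L}_k|_{Y_j})+r(1-g_j)$ is fixed, and no choice of gluing matrices moves it. For a general $\mc{L}_k$ satisfying only Notation \ref{n} these sums will not land in the required windows (e.g.\ two genus-$2$ components meeting at a node with $\deg(\mc{L}_k|_{Y_1})$ large and $\deg(\mc{L}_k|_{Y_2})=0$ already violates the inequality for $G(1)=Y_1$), so your ``manageable bookkeeping check'' is not available. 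The paper's resolution (proof of Theorem \ref{existvbdettl}) is to twist $\mc{F}_k$ by $\mo_{X_R}(\sum_i a_iY_i)\otimes\mo_{X_k}$ --- line bundles coming from the ambient regular surface, whose restriction shifts the partial sum over $G(n_0)$ by $-ra_{n_0}$ while leaving $\chi(\mc{F}_k)$ unchanged (Lemma \ref{bh27}) --- choosing the $a_i$ recursively along the ordering of Lemma \ref{bh26} so that each width-$r$ window is hit by the arithmetic progression of step $r$; the twisted sheaf is $\lambda$-semistable, hence Seshadri and then Gieseker semistable for the natural $\lambda$, and one untwists at the end using that twisting by an invertible sheaf preserves Gieseker semistability. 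Your purely curve-level argument never introduces these auxiliary twists, so it has no mechanism to make the inequalities hold.

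A secondary gap, which you flag as the ``main obstacle'' but do not close, is the determinant in the irreducible nodal case. No tuning of the parabolic flags on $\tx$ will do: the descent $\phi(\mc{E})$ only satisfies $\pi^*\det\phi(\mc{E})\cong\pi^*\mc{Q}$, and the discrepancy $\mc{Q}\otimes\det\phi(\mc{E})^{-1}$ lives in $\ker\bigl(\pi^*\colon\p(Y)\to\p(\tx)\bigr)\cong\oplus_{x_i\in J}k^*$. The paper's fix (Theorem \ref{existvb}) is to use that $k$ is algebraically closed, so $x\mapsto x^r$ is surjective on $k^*$, extract an $r$-th root $\mc{R}'$ of the discrepancy, and replace $\phi(\mc{E})$ by $\phi(\mc{E})\otimes\mc{R}'$, which changes neither the rank nor semistability but corrects the determinant to $\mc{Q}$. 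You would need to supply this (or an equivalent) argument for your component-level step to produce the prescribed determinant on $Y$ rather than merely on its normalisation.
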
 

We complete the proof of Theorem \ref{thmin2} using standard techniques such as Grothendieck's formal function theorem (see Proposition \ref{p5})
and Artin approximation to lift the locally free sheaf from the special fibre $X_k$ obtained in Theorem \ref{thmin1} to $X_R$. However the main obstacle is to ensure that the resulting locally free sheaf on $X_R$ has determinant $\mc{L}_R$. For this we develop the obstruction theory to lifting locally free sheaves with fixed determinant (see Theorem \ref{noobsg}) and prove that the obstruction vanishes.

\vspace{0.2 cm}
$\bf{Applications}$: Since locally free sheaves on fibred surfaces are widely studied both in pure mathematics as well as mathematical physics, Theorem  \ref{thmin2} has several applications
especially as we work in arbitrary characteristic. Here we state two applications based on recent works. 

\vspace{0.2 cm}
\noindent Recall that a field $L$ is called $C_1$ if any degree $d$ polynomial in $n$ variables with $n>d$ has a non-trivial solution. 
Let $X_{L}$ be a smooth curve of genus $g \geq 2$ defined over the fraction field of a Henselian discrete valuation ring $R$.
Denote by $\mc{L}_{L}$ a line bundle of degree $d$ on $X_{L}$ and by $M_{L}(r, \mc{L}_{L})$ the moduli space of stable locally free sheaves on $X_{L}$ with rank $r$ and determinant $\mc{L}_L$.
Recall that the variety $M_{L}(r, \mc{L}_{L})$ is a Fano variety and is therefore rationally connected.
The $C_{1}$ conjecture due to Lang, Manin and Koll\'{a}r states that every separably rationally connected variety over 
a $C_{1}$ field has a rational point.
The conjecture is still open in the case when $L$ is the fraction field of an Henselian discrete valuation ring 
of characteristic $0$ with algebraically closed residue field of characteristic $p>0$. 
Suppose $X_L$ has a semistable model (in the sense of Deligne-Mumford) say $X_R$ such that its special fibre is a generalised tree-like curve.
By {\cite[Lemma $3.2.4$]{IK}} there exists a lift $\mc{L}_R$ of $\mc{L}_L$ to $X_{R}$ satisfying the conditions given in Notations \ref{n}.
Since a Henselian discrete valuation ring of characteristic $0$ is excellent, 
Theorem \ref{thmin2} gives an alternate proof of the conjecture for the variety $M_{L}(r, \mc{L}_{L})$ to that given 
in \cite{K2}.

\noindent Let $R$ be a discrete valuation ring with algebraically closed residue field denoted $k$ of arbitrary characteristic. 
Let $f:X \to S$ be a  fibred surface with  $S:= \msp(R)$ and the special fibre $X_{k} := X \times_{S}\msp(k)$ a stable singular curve,
with one node.
In \cite{BBN}, Balaji, Barik and Nagaraj construct a relative moduli space of Gieseker-Hitchin pairs, denoted $\mc{G}^{H}_{X/S}(r,d)$ for $(r,d)$ a pair of coprime integers.  
However, the authors do not provide any argument for why $\mc{G}^{H}_{X/S}(r,d)$ is non-empty.
Using Theorem \ref{thmin2} one can show that this moduli space is indeed non-empty.
%Finally, using the fact that Gieseker geometric stability is an open condteition we prove:

\vspace{0.2 cm}
\emph{Acknowledgements}: I thank Prof. U. Bhosle and Prof. M. Teixidor i Bigas for discussing their work with me and answering my questions.
I was funded by the Berlin Mathematical School, Germany for the major part of this work and a minor part was completed while I was a visiting scholar at the Tata Institute of Fundamental Research, Mumbai.

\section{Stability for locally-free sheaves on tree-like curves}

In this section we recall the basic definitions and results related to stability of locally free sheaves on reducible curves.

\begin{defi}\label{dfstab}
 Let $X_{k}$ be a projective curve and $\mc{E}$ a coherent sheaf on $X_{k}$.

\begin{enumerate}
\item For $X_{k}$ integral, the \emph{slope} of $\mc{E}$, denoted $\mu(\mc{E})$ is defined as $\frac{d}{r}$, where $d$ is the degree of $\mc{E}$ and $r$ is its rank.   
The sheaf $\mc{E}$ is called $\textit{slope (semi) stable}$ if for any proper subsheaf $\mc{F} \subset \mc{E}$, $\mu(\mc{F}) (\leq) < \mu(\mc{E})$.

\vspace{0.1 cm}

\item Suppose the curve $X_k$ is reducible, say $X_k = \cup_{i=1}^{N} Y_i$, where $Y_i$ denotes an irreducible component. 
Then in \cite{SFV}, C.S Seshadri generalised slope semistability as follows.
Fix a polarisation $\lambda:=(\lambda_1,...,\lambda_N)$ for $0 \le \lambda_i \le 1$ on $X_k$ such that $\sum_i \lambda_i=1$.
Let $\mc{E}$ be a torsion-free sheaf on $X_k$ with rank $r_{i}$ on each component $Y_{i}$. 
Then $\mu_{\mr{sesh}}(\mc{E}) := \frac{\chi(\mc{E})}{\Sigma \lambda_{i}r_{i}}$.  We  call a 
sheaf $\mc{E}$ \emph{Seshadri-(semi)stable with respect to} $\lambda$, if for every subsheaf $\mc{F}$,
$\mu_{\mr{sesh}}(\mc{F}) \leq \mu_{\mr{sesh}}(\mc{E})$.  

\vspace{0.1 cm}

\item Let $\mc{E}$ be a coherent sheaf with support of dimension $d$. The Hilbert polynomial $P(\mc{E})(t)$ of $\mc{E}$ can be expressed as (see {\cite[Lemma $1.2.1$]{HL}}) 
\[P(\mc{E})(t) := \chi(\mc{E}\otimes \mc{O}_{X_k}(t)) =  \displaystyle\sum\limits_{i=0}^{d}\alpha_{i}(\mc{E})\dfrac{t^{i}}{i!} 
   \mbox{ for } t>>0.\] 
   The reduced Hilbert polynomial is denoted $P_{\red}(\mc{E})(t) := \dfrac{P(\mc{E})(t)}{\alpha_{d}(\mc{E})}$ . 
The sheaf $\mc{E}$ is called \emph{Gieseker (semi)stable} if for any proper subsheaf 
$\mc{F} \subset \mc{E}$, $P_{\red}(\mc{F}(t)) (\leq) < P_{\red}(\mc{E}(t))$ for all $t$ large enough. 
\end{enumerate}
\end{defi}
  
For the remainder of this section we will use the following notation.
  
\begin{note}\label{nb1}
Keep Notations \ref{n}. Let $X_k = \cup_{i=1}^{N} Y_i$ be a generalised tree-like curve 
(as defined in the intoduction) with $N$ irreducible components. 
Denote by $X_k^0$ the set of all points of intersection of two irreducible components of $X_k$.
For $Y$ a subcurve of $X_k$ denote by $Y^{0}$ the set of \emph{internal nodes} of $Y$ i.e., the intersection points of any two curves in $Y$.
Denote by $Y^b$ the nodes on $Y$ which are on the \emph{boundary of} $Y$ i.e., internal nodes of $X_k$ 
which lie on an irreducible component of $Y$ but are not internal nodes of $Y$. 
\end{note}

\begin{rem}
 We now recall results on the stability of locally free sheaves on tree-like curves.
 It should be noted that the results stated in \cite{teix1} are 
 under the assumption that the components of the tree-like curve are smooth and none of them are rational. 
 However, for us the irreducible components may be singular or rational. 
 Nonetheless the results quoted here still hold and the reader is referred to {\cite[Appendix $A.4$]{IK}} for the proofs in our setting.    
\end{rem}

\begin{lem}[{\cite[Lemma $1$]{teix1}}]\label{bh26}
Let $X_k$ be as in Notation \ref{nb1}. 
There is an ordering of the components of $X_k$ such that for every $i \le N-1$, there exists at most one connected component of 
$\overline{X_k \backslash Y_i}$ which contains curves with indices greater than $i$. 
Denote by $B(i)$ this connected component and by $G(i):=\overline{X_k \backslash B(i)}$.
Furthermore, $G(i)$ is connected and $G(i)$ intersects $B(i)$ at exactly $1$ point.
 \end{lem}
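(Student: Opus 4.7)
The statement is essentially a combinatorial fact about trees. My plan is to introduce the dual graph $T$ of $X_k$, formed by ignoring the singularities internal to each component: the vertices of $T$ are $Y_1,\ldots,Y_N$ and its edges are the points of $X_k^0$, one edge per intersection point. By the generalised tree-like hypothesis, $T$ is a tree; in particular any two distinct components of $X_k$ share at most one node, since a second shared node would produce a cycle of length two in $T$. I would then exploit the standard fact that removing a vertex $v$ from a tree yields exactly $\deg(v)$ connected subtrees, one for each neighbour of $v$, and that these are in bijection with the connected components of $\overline{X_k\setminus Y_v}$.

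To produce the ordering, I would peel leaves: let $Y_1$ be any leaf of $T$, and inductively let $Y_i$ be any leaf of the induced subgraph $T_{i-1}$ on $\{Y_i,\ldots,Y_N\}$. Since deleting a leaf from a tree preserves connectedness and acyclicity, every $T_i$ is itself a tree, hence connected. Now fix $i\le N-1$. Because $Y_i$ is a leaf of $T_{i-1}$, it has a unique neighbour $Y_{j(i)}\in\{Y_{i+1},\ldots,Y_N\}$ in $T$, while any other neighbours of $Y_i$ in $T$ must have index $<i$. Using the bijection above, each neighbour of $Y_i$ in $T$ gives a distinct connected component of $\overline{X_k\setminus Y_i}$, and the component corresponding to $Y_{j(i)}$ contains every $Y_j$ with $j>i$, because the subtree $T_i$ is connected. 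I would define this component to be $B(i)$; the remaining components correspond to neighbours of $Y_i$ with index $<i$ and therefore contain no curve of higher index, which gives the ``at most one'' assertion.

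Finally, $G(i)=\overline{X_k\setminus B(i)}$ equals the union of $Y_i$ with all connected components of $\overline{X_k\setminus Y_i}$ other than $B(i)$; each of those components meets $Y_i$ at a node, so $G(i)$ is a union of connected subsets all sharing a point with $Y_i$ and is therefore connected. For the intersection: if a component $Y_\ell\subset G(i)$ with $\ell\ne i$ met $B(i)$, then the shared node would force $Y_\ell$ into the connected component $B(i)$ of $\overline{X_k\setminus Y_i}$, a contradiction. Hence $G(i)\cap B(i)=Y_i\cap B(i)=Y_i\cap Y_{j(i)}$, which consists of a single node by the no-cycle observation made at the outset. The only substantive geometric input beyond pure tree combinatorics is precisely this observation that two components of $X_k$ share at most one node, which is where the generalised tree-like hypothesis genuinely enters; all other steps are formal.
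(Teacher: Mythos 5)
The paper does not actually prove this lemma: it is quoted from Teixidor i Bigas with a pointer to the appendix of the author's thesis, so there is no in-text argument to compare against. Your proof is correct and is the standard argument one would expect — pass to the dual tree, order the components by repeatedly peeling leaves, and use the facts that deleting a vertex of a tree separates it into one component per neighbour and that acyclicity forbids two components sharing more than one node; in particular you correctly isolate the one genuinely geometric input (two components of $X_k$ meet in at most one point) and handle the connectedness of $G(i)$ and the single-point intersection $G(i)\cap B(i)=Y_i\cap Y_{\nu(i)}$ cleanly.
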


\begin{note}\label{nb2} 
Observe that Lemma \ref{bh26} implies that for any $1\leq i \leq N$,
$Y_i$ intersects exactly one irreducible curve, say $Y_{\nu(i)}$ in $B(i)$ and $\nu(i)>i$.
Hence, the curve $Y_i \cup Y_{\nu(i)}$ is connected and for any $j<\nu(i), j \not= i $, $Y_i \cup Y_{\nu(i)}$ is contained in $B(j)$.
\begin{enumerate}
\item Fix from now on an ordering on $X_k$ as mentioned in Lemma \ref{bh26}.
For a given irreducible component $Y_i$ of $X_k$, denote by ${\nu(i)}$ the index such that the corresponding curve $Y_{\nu(i)}$ has the property that $Y_{\nu(i)} \subset B(i)$ and intersects $Y_i$ at exactly one point.
\item Let $\mc{E}$ be a locally free sheaf on $X_k$, $\mc{F}$ a subsheaf of $\mc{E}$, denote by $\mc{F}|_{Y_i}$ the image of the natural morphism $\mc{F} \otimes \mo_{Y_i} \to \mc{E} \otimes \mo_{Y_i}$.
For any point $P \in X_k$, again denote by $\mc{F}|_{P}$ the image of the natural morphism $\mc{F} \otimes \mo_P \to \mc{E} \otimes \mo_P$. 
\end{enumerate}
\end{note}

The following lemma shows twisting a locally free sheaf on the whole curve with a divisor coming from its components 
does not change the Euler characteristic of the sheaf.
It is an important technical tool that we use in the proof of Theorem \ref{existvbdettl}. 

\begin{lem}\label{bh27}
Let $Z$ be a connected subcurve of $X_k$ and $\mc{E}$ be a locally free sheaf on $X_k$.
Denote by $Z_1,...,Z_t$ the irreducible components of $Z$.
Fix integers $a_1,...,a_t$ such that $a_i=0$ if $Z_i$ intersect $\ov{X_k \backslash Z}$.
Denote by \[\mc{L}_0:=\mo_{X_R}\left(\sum\limits_{i=1}^t a_i Z_i\right) \otimes \mo_Z.\] 
Then, $\chi\left(\mc{E} \otimes \mo_Z\right)=\chi(\mc{E} \otimes \mc{L}_0 \otimes \mo_Z)$. 
In particular, \[\sum\limits_{i=1}^t \chi(\mc{E} \otimes \mo_{Z_i})=\sum\limits_{i=1}^t \chi(\mc{E} \otimes \mc{L}_0 \otimes \mo_{Z_i}).\]
\end{lem}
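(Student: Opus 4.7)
The plan is to reduce the identity to an intersection-theoretic computation on the surface $X_R$, and then use the fact that the full special fibre $X_k$ is a principal divisor on $X_R$ to convert the condition on the $a_i$ into the vanishing of the net degree contributed by $\mc{L}_0$.

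First, tensor the partial-normalization sequence for the tree-like subcurve $Z$,
\[ 0 \to \mo_Z \to \bigoplus_{i=1}^{t}\mo_{Z_i} \to \bigoplus_{p\in Z^{0}}\mo_p \to 0, \]
with the locally free sheaf $\mc{E}$, and separately with $\mc{E}\otimes\mc{L}_0$. Taking Euler characteristics gives
\[ \chi(\mc{E}\otimes\mo_Z) = \sum_{i=1}^{t}\chi(\mc{E}\otimes\mo_{Z_i}) - \sum_{p\in Z^{0}}\chi(\mc{E}\otimes\mo_p), \]
and the analogous identity for the $\mc{L}_0$-twist. Since $\mc{L}_0$ is an invertible sheaf, $\mc{L}_0\otimes\mo_p \cong \mo_p$ at every node, so the nodal correction terms coincide in the two decompositions. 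Both the main equality and the ``in particular'' one therefore reduce to the single claim
\[ \sum_{i=1}^{t}\bigl[\chi(\mc{E}\otimes\mc{L}_0\otimes\mo_{Z_i})-\chi(\mc{E}\otimes\mo_{Z_i})\bigr]=0. \]

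Applying Riemann--Roch on each integral component $Z_i$ to the invertible sheaf $\mc{L}_0\otimes\mo_{Z_i}$, the $i$-th bracket equals $\rk(\mc{E})\cdot\deg_{Z_i}(\mc{L}_0)$, and by definition $\deg_{Z_i}(\mc{L}_0)=\sum_j a_j\,(Z_j\cdot Z_i)$ in the intersection pairing on the regular surface $X_R$. Summing over $i$ and writing $Z:=\sum_i Z_i$ yields $\sum_i\deg_{Z_i}(\mc{L}_0)=\sum_j a_j\,(Z_j\cdot Z)$. Because $X_R$ is fibred over $\msp(R)$, the full special fibre $X_k$ is a principal divisor on $X_R$, so $(Z_j\cdot X_k)=0$ and hence
\[ (Z_j\cdot Z) = -(Z_j\cdot\ov{X_k\setminus Z}). \]
The hypothesis $a_j=0$ whenever $Z_j$ meets $\ov{X_k\setminus Z}$ now forces every nonzero term $a_j(Z_j\cdot Z)$ to vanish, completing the proof.

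The only technical subtlety is the bookkeeping at the self-intersections $(Z_j\cdot Z_j)$, which are generally nonzero (equal to minus the number of nodes of $X_k$ lying on $Z_j$); the boundary condition on the $a_i$ is calibrated precisely so that, once one invokes triviality of the fibre class via $(Z_j\cdot X_k)=0$, these self-intersections are absorbed into boundary intersection numbers that the hypothesis kills.
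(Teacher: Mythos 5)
Your proof is correct and follows essentially the same route as the paper's: the partial-normalization sequence for $Z$ reduces everything to a degree count on the components, which is then settled by intersection theory on the regular model $X_R$ together with the vanishing $(Z_j\cdot X_k)=0$ of the fibre class. The only difference is organizational --- you treat the whole divisor $\sum_j a_j Z_j$ at once using linearity of the intersection pairing, whereas the paper reduces to a single untwisted component $Z_j$ and recurses --- which is, if anything, slightly cleaner.
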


\begin{proof}
First observe that it suffices to prove the statement in the case  
$\mc{L}_0 := \mo_{X_R}(Z_j) \otimes \mo_Z$ where $Z_j$ does not intersect $\ov{X_k \backslash Z}$.
Then the argument can be completed by recursion. 
Denote by $\mc{E}_i:= \mc{E} \otimes \mo_{Z_i}$. 
Let $r: =\mr{rk}(\mc{E})$. 
By the following short exact sequences 

\[0 \to \mc{E} \otimes \mo_Z \to \bigoplus_{i=1}^t \mc{E}_i \to \bigoplus\limits_{P \in Z^0} \mo_P^r  \to 0,\]
\[\mbox{ and } \, \, \,  0 \to \mc{E} \otimes \mc{L}_0 \otimes \mo_Z \to \bigoplus_{i=1}^t \mc{E}_i \otimes \mc{L}_0 \to \bigoplus\limits_{P \in Z^0} \mo_P^r  \to 0.\]

\noindent  we notice,
    
\begin{equation}\label{bh08}
 \chi(\mc{E} \otimes \mo_Z)-\chi(\mc{E} \otimes \mc{L}_0 \otimes \mo_Z)=\sum\limits_{i=1}^t\left(\chi(\mc{E}_i)-\chi(\mc{E}_i \otimes \mc{L}_0)\right).
\end{equation}

\noindent Our goal is to show that the right hand side is $0$.   
Since tensor product by an invertible sheaf does not change the rank of a locally free sheaf, $\mr{rk}(\mc{E}_i)=\mr{rk}(\mc{E}_i \otimes \mc{L}_0)$, 
for all $i$.
Using \cite[Proposition $9.1.21$]{QL}, \[\deg(\mc{E}_j \otimes \mo_{X_R}(Z_j)\otimes \mo_Z)=\deg(\mc{E}_j)+r(Z_j^2)=\deg(\mc{E}_j)-r\sum\limits_{i=1, Y_i \not= Z_j}^N Y_i.Z_j\]
which is equal to $\deg(\mc{E}_j)- r\sum\limits_{i=1, i \not= j} Z_iZ_j$ because $Y_i.Z_j=0$ for $Y_i$ not in $Z$. 
 Also,\[ \deg(\mc{E}_i \otimes \mo_{X_R}(Z_j)\otimes \mo_Z)=\deg(\mc{E}_i)+rZ_i.Z_j \mbox{ for } i \not= j.\]
Hence, $\sum\limits_{i=1}^t \deg(\mc{E}_i \otimes \mo_{X_R}(Z_j)\otimes \mo_Z)=\sum\limits_{i=1}^t \deg(\mc{E}_i)$. 
Therefore, $\sum\limits_{i=1}^t \chi(\mc{E}_i)=\sum\limits_{i=1}^t \chi(\mc{E}_i \otimes \mo_{X_R}(Z_j)\otimes \mo_Z)$, which implies the lemma.
 \end{proof}

For tree-like curves we have the following semistability defined by Teixidor i Bigas (see \cite[Inequality $1$]{teix1}). 

\begin{defi}\label{lsemis}
Let $\mc{E}$ be a locally free sheaf of rank $r$ on $X_k$ such that $\mc{E} \otimes \mo_{Y_i}$ is semistable for each $i=1,...,N$. 
Given a polarisation, $\lambda:=(\lambda_1,\lambda_2,...,\lambda_N)$, 
we say that $\mc{E}$ is $\lambda$-\emph{semistable} if for each $i \le N$, the following inequality is satisfied:
   \[(\sum\limits_{Y_j \in G(i)} \lambda_j)\chi(\mc{E}) +r(|G(i)|-1)\le 
   \sum\limits_{Y_j \in G(i)}\chi(\mc{E} \otimes \mo_{Y_j})\le  (\sum\limits_{Y_j \in G(i)} \lambda_j)\chi(\mc{E}) +r|G(i)| \]
   where $|G(i)|$ denotes the number of irreducible components in $G(i)$.
  \end{defi}

The following theorem shows that on a generalised tree-like curve, $\lambda$-semistability is a sufficient criterion for a 
locally free sheaf which is slope semistable on the irreducible components to be Seshadri semistable with respect to the polarisation $\lambda$.

\begin{thm}[{\cite{teix1}}]\label{bh24}
Let $\mc{E}$ be a locally free sheaf on $X_k$ which is $\lambda$-semistable. 
Then $\mc{E}$ is Seshadri semistable with respect to the polarisation $\lambda$.
\end{thm}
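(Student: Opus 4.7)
Let $\mc{F} \subset \mc{E}$ be any coherent subsheaf with generic rank $r_i$ on $Y_i$. Since $\mc{E}$ has rank $r$ everywhere and $\sum_i \lambda_i = 1$, Seshadri semistability of $\mc{E}$ with respect to $\lambda$ amounts to the inequality $r \cdot \chi(\mc{F}) \le \chi(\mc{E}) \sum_i \lambda_i r_i$. First I would replace $\mc{F}$ with its saturation in $\mc{E}$---which only increases $\chi(\mc{F})$ and leaves all $r_i$ unchanged---and then make a further local modification at the nodes so that $\mc{F}|_{Y_i}$ is locally free on each component and the fibre dimension $d_P := \dim \mc{F}|_P$ attains its maximal allowed value $\max(r_i, r_j)$ at each node $P = Y_i \cap Y_j$. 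A restriction-to-components short exact sequence analogous to the one used in the proof of Lemma \ref{bh27} then gives
\[
\chi(\mc{F}) = \sum_i \chi(\mc{F}|_{Y_i}) - \delta,
\]
where $\delta$ is a local sum at the nodes determined by the $d_P$.

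Next I would use slope-semistability of each $\mc{E}|_{Y_i}$---a hypothesis of $\lambda$-semistability---together with Riemann--Roch on each integral curve $Y_i$ to get $\chi(\mc{F}|_{Y_i}) \le (r_i/r)\chi(\mc{E}|_{Y_i})$. Substituting, the target inequality reduces to the purely combinatorial estimate
\[
\sum_i r_i\,\chi(\mc{E}|_{Y_i}) - r\,\delta \;\le\; \chi(\mc{E})\sum_i \lambda_i r_i.
\]
To prove this I would use the ordering of components from Lemma \ref{bh26} chosen in depth-first fashion, so that the $G(i)$ form a hierarchy of nested subtrees with $|G(i) \cap B(i)| = 1$ at each transition. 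The left-hand side is then rearranged as a telescoping sum over this hierarchy: for each $G(i)$ in the decomposition, the corresponding block is bounded using the two-sided inequality of Definition \ref{lsemis} applied to $\sum_{Y_j \in G(i)}\chi(\mc{E}|_{Y_j})$. Lemma \ref{bh27} is used along the way to normalise intermediate $\chi$-computations when tensoring by boundary divisors of subcurves of $X_k$.

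The main obstacle is precisely this combinatorial matching, since the weights $r_i$ of an arbitrary subsheaf $\mc{F}$ need not respect the $G(i)$-hierarchy: one must carefully choose whether to apply the upper or the lower bound of Definition \ref{lsemis} at each $G(i)$ depending on whether $r_i$ increases or decreases across the distinguished transition node. The tree-like hypothesis is essential here: because each node is the unique connection between the two parts of $X_k$ meeting there, the error of size $r$ from each inequality in Definition \ref{lsemis} can be unambiguously attributed to a single node, and the accumulated error over the entire hierarchy is precisely matched by $r\delta$ thanks to the choice $d_P = \max(r_i, r_j)$ made in the initial reduction.
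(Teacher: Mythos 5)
The paper itself offers no proof of this theorem---it is imported from Teixidor i Bigas \cite{teix1}, with the adaptation to singular and rational components deferred to the appendix of \cite{IK}---so the comparison is really with the cited proof, and your sketch reconstructs its strategy faithfully: restrict a saturated subsheaf to the components, use slope semistability of each $\mc{E}\otimes\mo_{Y_i}$ together with Riemann--Roch to get $\chi(\mc{F}|_{Y_i})\le (r_i/r)\,\chi(\mc{E}\otimes\mo_{Y_i})$, and then feed the two-sided inequalities of Definition \ref{lsemis} through the nested $G(i)$-hierarchy, choosing the upper or lower bound according to the sign of $r_i-r_{\nu(i)}$ (I checked that the bookkeeping closes up exactly in the two-component case, with the error $r$ from the inequality matched by $r\bigl(\max(r_1,r_2)-r_2\bigr)$). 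Two small repairs: first, $\max(r_i,r_j)$ is the \emph{minimal}, not maximal, possible value of $\dim\mc{F}|_P$, and no modification of $\mc{F}$ is needed---what the argument actually uses is the universally valid lower bound $\dim Q_P\ge\max(r_i,r_j)$ on the node contribution in $0\to\mc{F}\to\oplus_i\mc{F}|_{Y_i}\to Q\to 0$, which gives $\chi(\mc{F})\le\sum_i\chi(\mc{F}|_{Y_i})-\sum_P\max(r_i,r_j)$ directly. Second, since the $Y_i$ may themselves be irreducible nodal curves, ``locally free on each component'' should be ``torsion-free,'' which is all the slope bound requires.
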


\section{Preliminaries on generalised parabolic bundles}\label{existnc}

In this section we briefly recall the basic definitions and results concerning generalised parabolic 
bundles that we need. We state definitions in our setting although they may hold much more generally.

The following is a well-known result. 

\begin{thm}\label{existvbsmdet}
Let $k$ be an algebraically closed field of arbitrary characteristic and $Y$ be a smooth, projective curve of genus $g \geq 1$
defined over $k$.
Fix integers $r,d$ with $r\geq 2$.
Let $\mc{L}$ be an invertible sheaf of degree $d$ on $Y$.
There exists a semistable locally free sheaf of rank $r$ and determinant $\mc{L}$ on $Y$. 
\end{thm}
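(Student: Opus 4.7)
The plan is to reduce the problem to the existence of a semistable locally free sheaf of rank $r$ and degree $d$ on $Y$ (with no condition on the determinant), and then to correct the determinant by twisting with a suitable line bundle of degree $0$. The hint for this strategy is already given in the introduction, where the author sketches that once a semistable sheaf of the correct numerical invariants is found, one adjusts the determinant using the fact that multiplication by $r$ on $\mr{Pic}^{0}(Y)$ is an isogeny.

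First I would dispose of the case $g=1$ separately. For an elliptic curve, the classification of vector bundles extended by Tu to arbitrary characteristic (cited in the introduction as \cite[Theorem $3$]{Tu}) already yields the existence of a semistable locally free sheaf of rank $r$ with any prescribed determinant of degree $d$, so the statement is immediate.

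For $g \geq 2$, I would invoke \cite[Theorem $2.4.6$]{IK} (the characteristic-free refinement of \cite[Theorem $8.6.1$]{po} mentioned in the introduction) to produce a semistable locally free sheaf $\mc{E}$ on $Y$ of rank $r$ and degree $d$. Denote $\mc{L}' := \det \mc{E}$, which lies in $\mr{Pic}^{d}(Y)$, so that $\mc{L} \otimes (\mc{L}')^{-1} \in \mr{Pic}^{0}(Y)$. Since $g \ge 1$, the Jacobian $\mr{Pic}^{0}(Y)$ is a non-trivial abelian variety and the multiplication-by-$r$ map $[r]\colon \mr{Pic}^{0}(Y) \to \mr{Pic}^{0}(Y)$ is a surjective isogeny. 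Thus there exists $\mc{M} \in \mr{Pic}^{0}(Y)$ with $\mc{M}^{\otimes r} \cong \mc{L} \otimes (\mc{L}')^{-1}$. Setting $\mc{F} := \mc{E} \otimes \mc{M}$, tensoring by a line bundle preserves slope semistability and the rank, and one computes $\det \mc{F} = \mc{L}' \otimes \mc{M}^{\otimes r} \cong \mc{L}$.

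The only non-formal ingredient is the existence of a rank $r$, degree $d$ semistable sheaf on a smooth curve over an algebraically closed field of arbitrary characteristic; this is the main obstacle, but it is already handled by \cite[Theorem $2.4.6$]{IK}. Everything else is a direct application of the surjectivity of the isogeny $[r]$ on the Jacobian, so no further technical difficulties arise.
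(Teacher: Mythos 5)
Your argument is correct and follows essentially the same route as the paper: the paper's proof simply cites \cite[Theorem $3$]{Tu} for $g=1$ and \cite[Theorem $2.4.6$ and Proposition $2.4.7$]{IK} for $g\ge 2$, where the cited Proposition is precisely the determinant-adjustment via surjectivity of $[r]$ on $\mr{Pic}^0(Y)$ that you spell out. No gaps.
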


\begin{proof}
In the case when $Y$ is a curve of genus $g=1$ see \cite[Theorem $3$]{Tu} for a proof.  
For the case $g\geq 2$, see {\cite[Theorem $2.4.6$ and Proposition $2.4.7$]{IK}} for a proof that works in any characteristic.
\end{proof}

\begin{rem}\label{ratmul}
By Grothendieck's theorem, the only semistable locally free sheaves of rank $r$ on $\mathbf{P}^1$ are of the form $\oplus_{i=1}^{r} \mc{O}_{\mathbf{P}^1}(d)$ for some $d \in \mathbf{Z}$ (see for example \cite[Lemma $3.2.2$]{IK} for a full proof). 
\end{rem}

For the rest of this section, we keep the following notations.

\begin{note}\label{n11}
Let $Y$ be an irreducible nodal curve defined over an algebraically closed field of arbitrary characteristic.
Denote by $\pi: \tilde{Y} \rightarrow Y$ the normalisation map.
Let $\mc{Q}$ be an invertible sheaf on $Y$ of degree $d$.
Denote by $J$ the set of nodes of $Y$ and let $\gamma$ be the number of nodes.
For all $1 \leq i \leq \gamma$, let $p_{i},q_{i}$ be the two points in $\tx$ lying over the double point $x_{i} \in Y$ and let $D_{i} := p_{i} + q_{i}$ be an effective divisor on $\tilde{Y}$.
Denote by $\mc{E}$ a semistable locally free sheaf on $\tx$ of rank $r$ and determinant $\pi^{*}Q$, the existence of which we have proven in Theorem \ref{existvbsmdet}.

\vspace{0.2 cm}
Denote by $\mc{E}|_{D_i} := H^{0}(\mc{E}\otimes\mc{O}_{D_{i}}) \otimes \mo_{D_i}$.
For $x_i \in J$, let 
\[\mc{E}(p_i):= \mc{E}_{p_i} \otimes k(p_i), \ \ \  \mc{E}(q_i):= \mc{E}_{q_i} \otimes k(q_i),\] 
\noindent where $k(p_i)$ and $k(q_i)$ are the residue fields at the points $p_i$ and $q_i$ respectively.
Fix a set of basis elements $\{e_j\}_{j=1}^r$ and $\{f_j\}_{j=1}^{r}$ of $\mc{E}_{p_i}$ and $\mc{E}_{q_i}$, respectively.
By abuse of notation, we  again denote by $e_j$ and $f_j$ their image in $\mc{E}(p_i)$ and $\mc{E}(q_i)$, respectively.
\end{note}

\begin{defi}[{\cite[Definition $3.1$]{B1}}]\label{tfgpb}
Let $\mc{E}$ be as in Notation \ref{n11}. Furthermore assume that $\tx$ has genus $g \geq 1$.
\begin{enumerate} 
\item We define a \emph{generalised parabolic structure} $\sigma$ of $\mc{E}$ over the divisors $D_i$ as follows.
Denote by $F^{i}_1(\mc{E})$ the $k$-vector space generated by $e_{j}\oplus f_{j}$ for all $i = 1, \dots \gamma$. We assign to each singular point $x_i$, $1 \leq i \leq \gamma$:  
\begin{enumerate}
\item a flag of vector subspaces $\Lambda^{i}: F^{i}_{0}(\mc{E}) = \mc{E}|_{D_{i}} \supset F^{i}_1(\mc{E}) \supset F^{i}_2(\mc{E}) = 0$.
\item weights $\ul{\alpha}^{i} = (0,1)$. 
\end{enumerate}
\item We define a \emph{generalised parabolic locally free sheaf} $(\mc{E},\underline{\Lambda},\underline{\alpha})$ where 
$\underline{\Lambda} = (\Lambda^{1},\dots,\Lambda^{\gamma})$ and $\underline{\alpha} = (\ul{\alpha}^{1}, \dots ,\ul{\alpha^{\gamma}})$.
We associate to the generalised parabolic bundle $(\mc{E},\underline{\Lambda},\underline{\alpha})$, the torsion-free sheaf $\phi(\mc{E})$ of rank $r$ and degree $d$ on the nodal curve $Y$ as the kernel of the composition:

\begin{equation}\label{ses1}
  \pi_{*}(\mc{E}) \to \bigoplus _{i}^{\gamma}\pi_{*}(\mc{E}) \otimes k(x_i) \to \bigoplus_{i}^{\gamma} \frac{\pi_{*}(\mc{E}) \otimes k(x_i)}{\pi_{*}(F^{i}_1(\mc{E}))} \to 0 
\end{equation} 
%Note that $\pi_{*}(\mc{E})\otimes k(x) = (k({x_1})+k({x_2}))^{n}$.
%Hence $\mc{F}$ is the kernel of the morphism $\pi_{*} \to (k(x_{1}) \oplus k(x_{2}))^{n}/(\Delta((k(x_{1} \oplus k(x_{2})^{n})$.
\end{enumerate}
\end{defi}

 \vspace{0.2 cm}
 
\begin{defi}[{\cite[Definition $3.5$]{B1}}]\label{gpbstdefi}
Let $(\mc{E}, \underline{\Lambda},\underline{\alpha})$ be a generalised parabolic locally free sheaf with generalised parabolic structures $(\Lambda^{i}, \underline{\alpha}^{i})$ over the divisors $\{D_i\}_{i=1,\dots \gamma}$.
\begin{enumerate}
\item Denote by $m_1^i:= \mr{dim}(F_{0}^i(\mc{E})/F_{1}^i(\mc{E}))$ and $m_2^i:= \mr{dim}(F_{1}^i(\mc{E})/F_{2}^i(\mc{E}))$.
We define the \emph{weight} of $\mc{E}$ \emph{over a divisor $D_i$} as $wt_{D_i}(\mc{E}) = \sum\limits_{l=1}^{2}m_{l}^i\alpha_{l}^i$.
The \emph{weight} of $\mc{E}$, denoted $\mr{wt}(\mc{E})$, is defined as $\sum \limits_{i}wt_{D_{i}}(\mc{E})$. 
\item The \emph{parabolic degree} of $\mc{E}$ is defined as $\mr{par deg}(\mc{E}) = \mr{deg} (\mc{E}) + \mr{wt}(\mc{E})$.
\item The \emph{parabolic slope} of $\mc{E}$, denoted $\mr{par} \mu(\mc{E})$, is defined as $\frac{\mr{par deg}(\mc{E})}{\mr{rank}(\mc{E})}$. 
\item A generalised parabolic bundle $(\mc{E}, \ul{\Lambda},\ul{\alpha})$ is \emph{parabolic semistable} (respectively \emph{parabolic stable}) if for every proper subbundle $(\mc{K})$ of $\mc{E}$, one has $\mr{par} \mu(\mc{K}) \leq \mr{par} \mu(\mc{E})$ (respectively $< \mr{par} \mu(\mc{E})$). 
\end{enumerate}
\end{defi}
  
\vspace{0.2 cm}
  
\begin{lem}\label{phirkdeg}
The torsion-free sheaf $\phi(\mc{E})$ on the curve $Y$ is locally free of rank $r$ and degree $d$. 
\end{lem}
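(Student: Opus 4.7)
The construction of $\phi(\mc{E})$ fits into the short exact sequence
\begin{equation*}
0 \to \phi(\mc{E}) \to \pi_{*}\mc{E} \to \bigoplus_{i=1}^{\gamma} \frac{\pi_{*}\mc{E} \otimes k(x_{i})}{F_{1}^{i}(\mc{E})} \to 0,
\end{equation*}
whose cokernel is a skyscraper sheaf of total $k$-length $r\gamma$, because $F_{1}^{i}(\mc{E})$ is an $r$-dimensional subspace of the $2r$-dimensional space $\mc{E}(p_{i}) \oplus \mc{E}(q_{i}) = \pi_{*}\mc{E} \otimes k(x_{i})$. The strategy is to check local freeness and rank by a local computation at each node and then to read off the degree from this sequence via Riemann--Roch.

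Away from the set $J$ of nodes, $\pi$ is an isomorphism, so $\pi_{*}\mc{E}$ is locally free of rank $r$ and the sequence identifies $\phi(\mc{E})$ with it there. At a node $x_{i}$, I would pass to completions: writing $A := k[[u,v]]/(uv)$, we have an embedding $\hat{\mc{O}}_{Y,x_{i}} = A \hookrightarrow \hat{\mc{O}}_{\tx,p_{i}} \oplus \hat{\mc{O}}_{\tx,q_{i}} = k[[u]] \oplus k[[v]]$ given by $u \mapsto (u,0)$ and $v \mapsto (0,v)$. Lift the chosen bases $\{e_{j}\}$ and $\{f_{j}\}$ to formal bases of $\mc{E}^{\wedge}_{p_{i}}$ and $\mc{E}^{\wedge}_{q_{i}}$; in these bases the completion of $\phi(\mc{E})$ at $x_{i}$ becomes
\begin{equation*}
\bigl\{(a,b) \in k[[u]]^{r} \oplus k[[v]]^{r} : a_{j}(0) = b_{j}(0) \text{ for all } j=1,\dots,r\bigr\},
\end{equation*}
which is exactly $r$ copies of $A$ sitting diagonally via the embedding above. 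Hence $\phi(\mc{E})$ is locally free of rank $r$ at each node and therefore everywhere on $Y$.

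To pin down the degree, I take Euler characteristics in the displayed sequence: $\chi(\phi(\mc{E})) = \chi(\pi_{*}\mc{E}) - r\gamma$. Finiteness of $\pi$ gives $\chi(\pi_{*}\mc{E}) = \chi(\mc{E})$, and birationality of $\pi$ gives $\deg(\mc{E}) = \deg(\pi^{*}\mc{Q}) = d$. Applying Riemann--Roch on $\tx$ and using $g(\tx) = p_{a}(Y) - \gamma$ yields $\chi(\mc{E}) = d + r(1 - p_{a}(Y) + \gamma)$, hence $\chi(\phi(\mc{E})) = d + r(1 - p_{a}(Y))$. Comparing with Riemann--Roch on $Y$ applied to the rank-$r$ locally free sheaf $\phi(\mc{E})$ forces $\deg(\phi(\mc{E})) = d$.

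The only delicate step is the node-local identification of $\phi(\mc{E})^{\wedge}_{x_{i}}$ with $A^{r}$ inside $k[[u]]^{r} \oplus k[[v]]^{r}$; once that is in hand, both the rank statement and the degree computation are routine, and I do not expect a serious obstacle.
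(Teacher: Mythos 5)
Your proposal is correct, and the rank and degree computations are exactly the paper's: the same short exact sequence, $\chi(\pi_{*}\mc{E}) = \chi(\phi(\mc{E})) + r\gamma$, and Riemann--Roch on $\tx$ and on $Y$ with $g(\tx) = p_{a}(Y) - \gamma$. The one place you diverge is local freeness at the nodes: the paper observes that the two projections $\pr_1^i, \pr_2^i : F_1^i(\mc{E}) \to \mc{E}(p_i)$, $\mc{E}(q_i)$ are isomorphisms and then simply cites Bhosle's Proposition~4.3 of \cite{B1}, whereas you prove that statement directly by completing at $x_i$ and identifying $\phi(\mc{E})^{\wedge}_{x_i}$ with the diagonal copy of $\bigl(k[[u,v]]/(uv)\bigr)^{r}$ inside $k[[u]]^{r} \oplus k[[v]]^{r}$. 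Your identification is right (the condition $a_j(0)=b_j(0)$ is exactly membership of the fibre in $F_1^i(\mc{E})$ in the chosen bases, and implicitly uses that $F_1^i$ is the graph of an isomorphism $\mc{E}(p_i) \to \mc{E}(q_i)$, i.e.\ the same fact the paper extracts from the projections), so you get a self-contained argument at the cost of a completion computation; the paper's route is shorter but outsources the key local statement to \cite{B1}. Either way the rank then follows, by your local description or by the paper's additivity-of-rank argument, and the degree step is identical.
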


\begin{proof}
Let $(\mc{E}, \ul{\Lambda}, \ul{\alpha})$ be as in Definition \ref{tfgpb} and let
 \[\pr^i_1:F_1^i(\mc{E}) \to \mc{E}(p_i) \oplus \mc{E}(q_i) \to \mc{E}(p_i), \, \, \pr_2^i:F_1^i(\mc{E}) \to \mc{E}(p_i) \oplus \mc{E}(q_i) \to \mc{E}(q_i).\]
\noindent By definition 
\[\pr_1^i:F^{i}_{1}(\mc{E}) \xrightarrow{\simeq}  k(p_i)^{\oplus r} \, \, \mbox{ and } \, \, \pr_2^i:F^{i}_{1}(\mc{E}) \xrightarrow{\simeq}  k(q_i)^{\oplus r}.\] 
be the natural projection maps.
Therefore by {\cite[Proposition $4.3$]{B1}}, the torsion-free sheaf $\phi(\mc{E})$ is locally free. 

\vspace{0.2 cm}
We now show that $\phi(\mc{E})$ has the same rank and degree as $\mc{E}$. 
Since $Y$ is irreducible and $\pi$ is birational, $\mr{rk}(\pi_{*}(\mc{E})) = \mr{rk}(\mc{E})$.
Furthermore, the rank of $\frac{\pi_{*}(\mc{E}) \otimes k(x_i)}{F^{i}_1(\mc{E})} = 0$ since it is supported on points.
Then using the additivity of rank, we have
\[ \mr{rk}(\phi(\mc{E})) = \mr{rk}(\pi_{*}(\mc{E})) = \mr{rk}(\mc{E}). \]
Moreover  $\chi(\pi_{*}(\mc{E})) = \chi(\phi(\mc{E})) + \sum\limits_{i =1}^{\gamma}\mr{dim}(F^{i}_{1}(\mc{E}))$.
Since pushforwards preserve Euler characteristic, using Riemann-Roch for locally free sheaves, we have
\[ r(1-(\rho_{a}(Y)-\gamma)) + \mr{deg}(\mc{E}) = r(1-\rho_{a}(Y)) + \mr{deg}(\phi(\mc{E})) + r\gamma.  \]
\noindent where $\rho_a$ denotes the genus.
Therefore $\mr{deg}(\phi(\mc{E})) = \mr{deg}(\mc{E})$.
As $\mc{E}$ has degree $d$, so does $\phi(\mc{E})$.
\end{proof}

Using this we can prove the following.

\begin{lem}\label{in01}
 Let $\mc{E}$ and $\phi(\mc{E})$ be as above. Then $\pi^*\phi(\mc{E}) \cong \mc{E}$.
\end{lem}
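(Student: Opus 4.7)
The plan is to produce a natural map $\psi: \pi^{*}\phi(\mc{E}) \to \mc{E}$ and show it is an isomorphism. This $\psi$ comes by adjunction from the inclusion $\phi(\mc{E}) \hookrightarrow \pi_{*}\mc{E}$ in the defining sequence (\ref{ses1}); concretely it is the composition $\pi^{*}\phi(\mc{E}) \to \pi^{*}\pi_{*}\mc{E} \to \mc{E}$ whose second arrow is the counit. By Lemma \ref{phirkdeg}, $\phi(\mc{E})$ is locally free of rank $r$, hence so is $\pi^{*}\phi(\mc{E})$; thus $\psi$ is a map between two locally free sheaves of rank $r$ on the smooth curve $\tx$, and it will suffice to verify that $\psi$ is an isomorphism on stalks.

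Away from the exceptional set $\pi^{-1}(J)$ the morphism $\pi$ is already an isomorphism onto its image and the cokernel of $\phi(\mc{E}) \hookrightarrow \pi_{*}\mc{E}$ is supported on $J$, so $\psi$ is automatically an isomorphism on $\tx \setminus \pi^{-1}(J)$. The real work is to check $\psi$ at each $p_{i}$ and $q_{i}$; I will treat $p_{i}$ in detail since $q_{i}$ is symmetric.

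Passing to completions, write $R := \widehat{\mo}_{Y,x_{i}} \cong k[[u,v]]/(uv)$ with $R_{p} := R/(v) \cong \widehat{\mo}_{\tx,p_{i}}$ and $R_{q} := R/(u) \cong \widehat{\mo}_{\tx,q_{i}}$. Then $(\pi_{*}\mc{E})^{\wedge}_{x_{i}} = R_{p}^{r} \oplus R_{q}^{r}$, and in terms of the chosen bases $\{e_{j}\},\{f_{j}\}$ the subspace $F_{1}^{i}$ is the diagonal of $k^{r} \oplus k^{r}$, so $\phi(\mc{E})^{\wedge}_{x_{i}}$ is the submodule of pairs $(a,b)$ with $a(p_{i}) = b(q_{i})$. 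Since $R_{p} = R/(v)$,
\[\pi^{*}\phi(\mc{E})^{\wedge}_{p_{i}} \;=\; R_{p}\otimes_{R}\phi(\mc{E})^{\wedge}_{x_{i}} \;=\; \phi(\mc{E})^{\wedge}_{x_{i}}\big/\,v\cdot\phi(\mc{E})^{\wedge}_{x_{i}}.\]
Because $v$ acts by $0$ on $R_{p}^{r}$ and by multiplication by $v$ on $R_{q}^{r}$, one has $v\cdot\phi(\mc{E})^{\wedge}_{x_{i}} = 0 \oplus vR_{q}^{r}$; hence every class in the quotient is uniquely represented by a pair of the form $(a, a(p_{i}))$, and projection onto the first coordinate gives an isomorphism onto $R_{p}^{r} = \mc{E}^{\wedge}_{p_{i}}$. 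A short unravelling shows this coincides with the stalk of $\psi$ coming from the counit, and the same argument with $u$ and $v$ interchanged handles $q_{i}$.

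The main obstacle is the local bookkeeping at the nodes: one has to match the choice of bases $\{e_{j}\},\{f_{j}\}$ with the description of $F_{1}^{i}$ as a diagonal and verify that the isomorphism produced on the stalk really is the one induced by adjunction. Once that identification is in place, the essential step is the one-line observation that $R_{p}\otimes_{R}(-)$ kills the $R_{q}$-component of the diagonal data and leaves exactly $\mc{E}^{\wedge}_{p_{i}}$, so there is no deeper technical issue.
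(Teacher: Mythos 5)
Your proof is correct, but it takes a genuinely different route from the paper's. Both arguments start from the same map $\tau=\psi:\pi^{*}\phi(\mc{E})\to\pi^{*}\pi_{*}\mc{E}\to\mc{E}$ and both note that it is an isomorphism away from $\pi^{-1}(J)$; the difference is in how the behaviour at the nodes is handled. The paper avoids any local computation there: it observes that $\ker\tau$ is a torsion subsheaf of the locally free sheaf $\pi^{*}\phi(\mc{E})$, hence zero, and that $\mr{coker}(\tau)$ is then a skyscraper sheaf whose degree is forced to vanish because $\deg\pi^{*}\phi(\mc{E})=\deg\phi(\mc{E})=\deg\mc{E}$ by Lemma \ref{phirkdeg} --- a purely global degree count. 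You instead verify the isomorphism on completed stalks at each $p_{i}$ and $q_{i}$ using the explicit description of $\phi(\mc{E})$ near a node as the ``diagonal'' submodule of $R_{p}^{r}\oplus R_{q}^{r}$ and the observation that $-\otimes_{R}R_{p}$ kills the $R_{q}$-leg. That computation is correct (including the identification $v\cdot\phi(\mc{E})^{\wedge}_{x_{i}}=0\oplus vR_{q}^{r}$, which uses surjectivity of $R_{p}^{r}\to k^{r}$), and it has the side benefit of reproving local freeness of $\phi(\mc{E})$ at the nodes directly rather than routing it through {\cite[Proposition $4.3$]{B1}} via Lemma \ref{phirkdeg}; the costs are the basis bookkeeping you acknowledge, the (routine but unstated) appeal to faithful flatness of completion to pass from completed stalks back to Zariski stalks, and the still-to-be-written check that your stalkwise isomorphism really is the counit map. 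Given that Lemma \ref{phirkdeg} is already available, the paper's degree argument is the shorter of the two, but yours is self-contained and more transparent about what happens at a node.
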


\begin{proof} 
Consider the pull-back of the morphism $\phi(\mc{E}) \to \pi_{*} \mc{E}$ under the normalisation map $\pi$ and the natural map $\pi^{*}\pi_{*}\mc{E} \to \mc{E}$. Denote by $\tau$ the composition 
 \[ \pi^{*}\phi(\mc{E}) \to \pi^{*}\pi_{*}(\mc{E}) \to \mc{E}.  \] 
 
\noindent Denote by $\mc{K}$ the kernel of $\tau$. 
Note that the localization of $\tau$ at $x$ is an isomorphism for all $x \not\in \pi^{-1}(J)$. 
Hence $\mc{K}$ is supported at finitely many points and is therefore a torsion sheaf. 
However, $\pi^{*}\phi(\mc{E})$ is locally free since by Lemma \ref{phirkdeg}, $\phi(\mc{E})$ is locally free and the pull back of a locally free sheaf is locally free.  
Hence $\pi^*\phi(\mc{E})$ cannot contain a non-zero torsion sheaf implying $\mc{K}=0$.
Therefore $\tau$ is injective.
 
\vspace{0.2 cm}
\noindent Since $\tau$ is an isomorphism for all $x \not\in \pi^{-1}(J)$, $\mr{coker}(\tau)$ is a skyscraper sheaf with support in $\pi^{-1}(J)$.
But degree of a non-trivial skyscraper sheaf is strictly positive.
Since degree is additive, by the short exact sequence 
\[ 0 \to \pi^{*}\phi(\mc{E}) \xrightarrow{\tau} \mc{E} \to \mr{coker}(\tau)\to 0 \] 
\noindent $\mr{deg}(\pi^{*}(\phi(\mc{E})) \leq \mr{deg}(\mc{E})$ with strict inequality if $\tau$ is not surjective. Since $\pi$ is the normalisation map, $\deg(\pi^*\phi(\mc{E})) = \deg(\phi(\mc{E}))$. 
By Lemma \ref{phirkdeg}, $\mr{deg}(\phi(\mc{E})) = \mr{deg}(\mc{E})$.
Hence, $\tau$ must be surjective.
Then $\tau$ is an isomorphism and hence $\pi^*\phi(\mc{E}) \cong \mc{E}$ as required.
\end{proof}

\begin{lem}\label{gpbsemistable} 
The generalised parabolic bundle $(\mc{E},\underline{\Lambda},\underline{\alpha})$ defined in Definition \ref{tfgpb} is parabolic semistable.  
\end{lem}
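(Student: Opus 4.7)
The plan is to reduce the parabolic semistability of $(\mc{E}, \ul{\Lambda}, \ul{\alpha})$ to the ordinary semistability of $\mc{E}$ on $\tx$. The bridge is an elementary linear-algebra bound on the dimension of the intersection of the ``diagonal'' subspace $F_1^i(\mc{E})$ with the fibre $\mc{K}|_{D_i}$ of a sub-bundle, exploiting the fact that $F_1^i(\mc{E})$ is the graph of an isomorphism $\mc{E}(p_i) \xrightarrow{\sim} \mc{E}(q_i)$.

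First I would compute $\mr{par}\mu(\mc{E})$. Since $\dim \mc{E}|_{D_i} = 2r$ and $\dim F_1^i(\mc{E}) = r$, Definition \ref{gpbstdefi} gives $m_1^i = m_2^i = r$; with weights $(0,1)$ this yields weight $r$ at each of the $\gamma$ nodes, so
\[\mr{par}\mu(\mc{E}) = \frac{\deg(\mc{E}) + r\gamma}{r} = \mu(\mc{E}) + \gamma.\]
For a proper sub-bundle $\mc{K} \subset \mc{E}$ of rank $s$, I would endow $\mc{K}$ with the induced parabolic structure $F_1^i(\mc{K}) := F_1^i(\mc{E}) \cap \mc{K}|_{D_i}$, with $F_2^i(\mc{K}) = 0$ and the same weights $(0,1)$. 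Writing $k_i := \dim F_1^i(\mc{K})$, the analogous calculation produces $\mr{wt}(\mc{K}) = \sum_i k_i$, hence
\[\mr{par}\mu(\mc{K}) = \frac{\deg(\mc{K}) + \sum_i k_i}{s}.\]

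The main step is the inequality $k_i \leq s$. By Definition \ref{tfgpb}, $F_1^i(\mc{E})$ is spanned by $\{e_j \oplus f_j\}_{j=1}^r$, and as already recorded in the proof of Lemma \ref{phirkdeg}, both projections $\pr_1^i$ and $\pr_2^i$ restrict to isomorphisms on $F_1^i(\mc{E})$. Consequently, the restriction of $\pr_1^i$ to $F_1^i(\mc{E}) \cap \mc{K}|_{D_i}$ is an injection into $\mc{K}(p_i)$, a vector space of dimension $s$, which gives $k_i \leq s$.

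Summing over $i$ gives $\sum_i k_i \leq s\gamma$, and the semistability of $\mc{E}$ on $\tx$ (guaranteed by Theorem \ref{existvbsmdet}) provides $\deg(\mc{K})/s \leq \mu(\mc{E})$, so
\[\mr{par}\mu(\mc{K}) \leq \mu(\mc{E}) + \gamma = \mr{par}\mu(\mc{E}),\]
which is exactly parabolic semistability. I do not anticipate any serious obstacle: the only non-trivial input is the graph-subspace bound $k_i \leq s$, and from there the conclusion is a one-line calculation.
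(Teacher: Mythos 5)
Your proposal is correct and follows essentially the same route as the paper: compute $\mathrm{par}\,\mu(\mc{E}) = \mu(\mc{E}) + \gamma$, bound $\dim F_1^i(\mc{K}) \leq \mathrm{rk}(\mc{K})$ for the induced structure on a sub-bundle, and conclude via the ordinary semistability of $\mc{E}$ on $\tx$. The only difference is that you spell out the justification of the key bound $\dim\bigl(F_1^i(\mc{E}) \cap \mc{K}|_{D_i}\bigr) \leq \mathrm{rk}(\mc{K})$ via the graph/projection argument, which the paper merely asserts.
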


\begin{proof}
By definition the generalised parabolic bundle $(\mc{E},\underline{\Lambda},\underline{\alpha})$ is parabolic semistable if for any sub-bundle $\mc{K} \subset \mc{E}$ with the induced parabolic structure
$\mr{par} \mu(\mc{K}) \leq \mr{par}\mu(\mc{E})$. 

\noindent Since we take weights $\ul{\alpha}^{i} = (0, 1)$ for all $1 \leq i \leq \gamma$, we have 

\[ \mr{par}\mu(\mc{E}) = \frac{\mr{deg}(\mc{E}) + \gamma(\mr{rk}(\mc{E}))}{\mr{rk}(\mc{E})}. \]

\noindent Note that $\mr{dim}\left(\frac{F^{i}_1(\mc{K})}{F_2^{i}(\mc{K})}\right) = \mr{dim}(F^{i}_{1}(\mc{K}))$ because $F^{i}_{2}(\mc{E}) = 0$, 
where \[F_j^i(\mc{K}_1)=F_j^i(\mc{E}) \cap (\pi_*\mc{K}_1 \otimes k(x_i)) \, \, \mbox{ for } \, j=1,2.\]
Moreover 
\[\mr{dim}(F^{i}_{1}(\mc{K})) = \mr{dim}(F^{i}_1(\mc{E})\cap H^{0}(\mc{K}\otimes \mc{O}_{D_{i}})) \leq \mr{rk}(\mc{K}).\] 
\noindent Therefore 
\[ \mr{par}\mu(\mc{K}) \leq \frac{\mr{deg}(\mc{K}) + \gamma(\mr{rk}(\mc{K}))}{\mr{rk}(\mc{K})}. \]
Since $\mc{E}$ is a semistable locally free sheaf we have 
\[ \mr{par} \mu(\mc{K}) \leq \frac{\mr{deg}(\mc{K}) + \gamma(\mr{rk}(\mc{K}))}{\mr{rk}(\mc{K})} \leq \frac{\mr{deg}(\mc{E}) + \gamma(\mr{rk}(\mc{E}))}{\mr{rk}(\mc{E})} = \mr{par} \mu(\mc{E})\]
\noindent and therefore $(\mc{E},\underline{\Lambda},\underline{\alpha})$ is a semistable generalised parabolic bundle. 
\end{proof}

\section{Existence of semistable locally free sheaves with fixed determinants on singular curves}\label{liftgroth}

Keep notations from previous sections.
In this section we prove the existence of a Gieseker semistable locally free sheaf $\mc{F}_k$ 
on $X_k$ such that $\mr{det}(\mc{F}_k) \simeq \mc{L}_{k}$ (see Theorem \ref{existvbdettl}). 
We first produce a semistable locally free sheaf on an irreducible nodal curve.

\vspace{0.2 cm}
In the following proposition we consider the case when the normalisation of an irreducible nodal curve is a smooth rational curve.

\begin{prop}\label{theratcase}
Let $Y$ be an irreducible nodal curve defined over an algebraically closed field such that the genus of $\tx$ is $0$. 
Then there exists a semistable locally free sheaf of rank $r$ and degree $d$ on $Y$.
\end{prop}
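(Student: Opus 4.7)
The obstacle to applying the construction of Section 3 directly is that Theorem \ref{existvbsmdet} is not available for $\tilde Y \cong \mathbf{P}^1$: by Remark \ref{ratmul}, a semistable rank $r$ locally free sheaf on $\tilde Y$ of degree $d$ exists only when $r \mid d$. My plan is to adapt the generalised parabolic bundle construction of Definition \ref{tfgpb} by starting from the maximally balanced (but not necessarily semistable on $\tilde Y$) rank $r$ locally free sheaf on $\tilde Y$ of degree $d$, equipping it with a generic gluing at each node, and showing that the resulting sheaf on $Y$ is semistable.

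Write $d = qr + s$ with $0 \leq s < r$ and set $\mathcal{E} := \mathcal{O}_{\mathbf{P}^1}(q)^{\oplus(r-s)} \oplus \mathcal{O}_{\mathbf{P}^1}(q+1)^{\oplus s}$. For each node $x_i \in Y$ pick an isomorphism $\phi_i : \mathcal{E}(p_i) \xrightarrow{\sim} \mathcal{E}(q_i)$ in general position, and take $F_1^i(\mathcal{E}) \subset \mathcal{E}|_{D_i}$ to be its graph; this equips $\mathcal{E}$ with a generalised parabolic structure in the sense of Definition \ref{tfgpb}. Set $\mathcal{F} := \phi(\mathcal{E})$. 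The proofs of Lemma \ref{phirkdeg} and Lemma \ref{in01} rely only on the additivity of Euler characteristic and the local structure of $\pi$ at the nodes, not on the genus of $\tilde Y$, so they go through verbatim to show that $\mathcal{F}$ is locally free on $Y$ of rank $r$ and degree $d$ with $\pi^*\mathcal{F} \cong \mathcal{E}$.

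For the semistability of $\mathcal{F}$: any proper subsheaf $\mathcal{K} \subsetneq \mathcal{F}$, after saturating, yields a rank $r' < r$ subbundle $\pi^*\mathcal{K}$ of $\mathcal{E}$ on $\mathbf{P}^1$ satisfying the descent condition $\phi_i\bigl(\pi^*\mathcal{K}(p_i)\bigr) = \pi^*\mathcal{K}(q_i)$ at every node. Because $\mathcal{E}$ is as balanced as possible, the maximum degree of a rank $r'$ subbundle of $\mathcal{E}$ on $\mathbf{P}^1$ is $r'q + \min(r',s)$, giving $\mu(\pi^*\mathcal{K}) - \mu(\mathcal{F}) \leq \min(1, s/r') - s/r$. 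For a fixed destabilising $\pi^*\mathcal{K}$, the descent condition cuts out a locally closed subset of positive codimension $r'(r-r')$ in $\mathrm{Isom}(\mathcal{E}(p_i),\mathcal{E}(q_i))$ at each node. Since $Y$ has at least one node, a dimension count on the parameter space of rank $r'$ subbundles of $\mathcal{E}$ of extremal degree, weighed against the codimensional constraints at the $\gamma$ nodes, shows that for a Zariski generic choice of $\{\phi_i\}$ no destabilising subbundle descends, and hence $\mu(\mathcal{K}) \leq d/r = \mu(\mathcal{F})$.

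The main obstacle is making the dimension count in the last step precise, which is tightest when $\gamma = 1$. In that extremal case a direct rigidity check suffices: any rank $r'$ subbundle of $\mathcal{E}$ whose slope exceeds $d/r$ must contain the summand $\mathcal{O}_{\mathbf{P}^1}(q+1)^{\oplus \min(r',s)}$, and for a generic $\phi_1$ this rigid subspace's fibre at $p_1$ is not mapped to its fibre at $q_1$, so no such subbundle descends to $Y$ and $\mathcal{F}$ is semistable.
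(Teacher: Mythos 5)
Your overall strategy (balanced bundle on $\mathbf{P}^1$ plus a generic generalised parabolic structure) can probably be made to work, but the key reduction as written has a genuine gap: it is not true that a destabilising saturated subsheaf of $\phi(\mathcal{E})$ yields a subbundle $\mathcal{K}\subset\mathcal{E}$ satisfying the \emph{full} descent condition $\phi_i(\mathcal{K}(p_i))=\mathcal{K}(q_i)$ at every node. A saturated subsheaf of $\phi(\mathcal{E})$ need not be locally free at the nodes; it corresponds to a subbundle $\mathcal{K}$ of $\mathcal{E}$ together with \emph{partial} gluing data, and its slope on $Y$ is governed by $\deg\mathcal{K}+\sum_i\dim\bigl(\mathcal{K}(p_i)\cap\phi_i^{-1}(\mathcal{K}(q_i))\bigr)$, exactly as in the parabolic slope of Definition \ref{gpbstdefi} and \cite[Proposition 4.2]{B1}. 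The condition you must arrange is therefore $\sum_i\dim\bigl(\mathcal{K}(p_i)\cap\phi_i^{-1}(\mathcal{K}(q_i))\bigr)\le \gamma r'-(\deg\mathcal{K}-r'd/r)$ for every subbundle of every rank $r'$, and since the excess $\deg\mathcal{K}-r'd/r$ can exceed $1$, this is strictly stronger than ``no full descent''. Concretely, take $r=5$, $d\equiv 3\pmod 5$, $\gamma=1$, $\mathcal{E}=\mathcal{O}(q)^{\oplus 2}\oplus\mathcal{O}(q+1)^{\oplus 3}$ and $\mathcal{K}=\mathcal{O}(q+1)^{\oplus 3}$: the excess is $6/5$, so you need $\dim\bigl(\mathcal{K}(p)\cap\phi^{-1}(\mathcal{K}(q))\bigr)\le 1$, whereas ``not fully descending'' only rules out dimension $3$; a $\phi$ for which this intersection is $2$-dimensional produces a destabilising rank-$3$ subsheaf of $\phi(\mathcal{E})$ even though $\mathcal{K}$ does not descend. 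The generic $\phi$ does satisfy the stronger inequality, but your dimension count is not aimed at it. Relatedly, the closing ``rigidity check'' rests on a false claim: the destabilising subbundles of the balanced bundle of a given rank and degree form positive-dimensional families (Grassmannians of copies of $\mathcal{O}(q+1)^{\oplus j}$ inside $\mathcal{O}(q+1)^{\oplus s}$, and larger quot-scheme strata for sub-extremal degrees), not a single rigid summand, so checking one subspace at $p_1$ and $q_1$ does not suffice even when $\gamma=1$.

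For comparison, the paper sidesteps both genericity and this bookkeeping: it takes the \emph{unbalanced} bundle $\mathcal{O}(d-(r-1)a)\oplus\mathcal{O}(a)^{\oplus(r-1)}$ with $ra\le d$ and one explicit gluing, namely the span of the vectors $e_j\oplus(f_1+\cdots+\widehat{f_j}+\cdots+f_r)$, chosen so that any subsheaf compatible with the gluing injects into the semistable summand $\mathcal{O}(a)^{\oplus(r-1)}$ and hence has slope at most $a\le d/r$. If you want to retain your generic approach, you should either verify the refined intersection-dimension inequality above for generic $\phi_i$ (a genuine but more delicate count over the families of subbundles of each rank and degree), or argue directly through the parabolic semistability criterion of \cite{B1}, as the paper does for the genus $\ge 1$ components in Theorem \ref{existvb}.
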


\begin{proof}
For simplicity we assume that the irreducible nodal curve has only one node. The general case follows similarly.
Let $a \in \mathbb{Z}$ such that $ra \le d$.
Consider the locally free sheaf $\mathcal{E} := \mathcal{O}_{\tilde{Y}}(d-(r-1)a) \oplus \mc{L}_1 \oplus ... \oplus \mc{L}_{r-1}$, where $\mc{L}_j=\mathcal{O}_{\tilde{Y}}(a)$. 
Let $x$ be the nodal point on $Y$ and $p,q$ the points on $\tilde{Y}$ over $x$ .
Denote by $D:=p+q$, $e_1$ (resp. $f_1$) a basis of $\mathcal{O}_{\tilde{Y}}(d-(r-1)a)(p)$ 
(resp. $\mathcal{O}_{\tilde{Y}}(d-(r-1)a)(q)$) the fibers at $p$ and $q$ respectively of the line bundle $\mathcal{O}_{\tilde{Y}}(d-(r-1)a)$. 
Similarly, fix a basis $e_{j+1}$ (resp. $f_{j+1}$) of the fiber $\mc{L}_j(p)$ (resp. $\mc{L}_j(q)$) for $1\leq j \leq r-1$. Choose $F_1(\mathcal{E})$ to be the vector subspace of the skyscaper sheaf $\mathcal{E}(p) \oplus \mathcal{E}(q)$ generated by 
$e_j \otimes \mathcal{O}_p \oplus (f_1 \oplus ... \oplus f_{j-1} \oplus \hat{f_j} \oplus f_{j+1} ... \oplus f_r) \otimes \mathcal{O}_q$ for $1\leq j \leq r$.
Let
 \[\pr_1:F_1(\mc{E}) \to \mc{E}(p) \oplus \mc{E}(q) \to \mc{E}(p), \, \, \pr_2:F_1(\mc{E}) \to \mc{E}(p) \oplus \mc{E}(q) \to \mc{E}(q)\]
 \noindent be the natural projection maps.
It is easy to check $\pr_1$ and $\pr_2$ are isomorphisms.  
Then by arguments exactly as in the proof of Lemma \ref{phirkdeg}, $\phi(\mc{E})$ is locally free of rank $r$ and degree $d$.

\vspace{0.2 cm}
Let us check that $\phi(\mathcal{E})$ is semistable. 
For this we need to prove that for any subsheaf $\mathcal{F} \subset \mathcal{E}$ with $\mathcal{F}(D) \subset F_1(E)$ (by $\mc{F}(D)$
we mean the fiber of $\mc{F}$ over $D$), we have 
\[\deg(\phi(\mc{F}))/\mathrm{rk}(\phi(\mc(F))) \leq \deg(\phi(\mc{E}))/\mathrm{rk}(\phi(\mc{E})) = d/r.\]
Again by arguments as in Lemma \ref{phirkdeg}, $\deg(\phi(\mc{F})) = \deg(\mc{F})$ and $\mr{rk}(\phi(\mc{F})) = \mr{rk}(\mc{F})$. 
Hence it suffices to prove $\deg(\mc{F})/\mr{rk}(\mc{F}) \leq d/r$. We claim that for any $\mc{F} \subset \mc{E}$ 
with $\mc{F}(D) \subset F_1(\mc{E})$, the composition map 
\[\psi:\mathcal{F} \hookrightarrow \mathcal{E} \to \mc{L}_1 \oplus ... \oplus \mc{L}_{r-1}\] is injective, 
\noindent where the last map is simply the projection map. 
Since $\mc{L}_1 \oplus ... \oplus \mc{L}_{r-1}$ is semistable, this will imply
\[\deg(\mc{F})/\mr{rk}(\mc{F}) \leq \mr{deg}(\mc{L}_1) = a.\] Since, by assumption $a \leq d/r$, 
 we will have the required result.

\emph{Proof of Claim}
Suppose $\psi$ is not injective. Then $\ker \psi$ is a non-zero locally free sheaf since $\tx$ is smooth
and $\mc{F}$ is locally free. This means there exists a small enough open neighbourhood $U$ of $D$ in $\tilde{Y}$ and at least one section $s \in \ker \psi|_U$
such that $s(p) \not=0$. As $\psi(s)=0$, this means $s(p)=\lambda e_1$, $\lambda \not=0$, which implies $s(q)=\lambda(f_2 \oplus ... \oplus f_r)$, as $s(D) \in F_1(\mathcal{E})$.
But by assumption, $\psi(s)(q)=0$ which is a contradiction. This means $\psi$ is injective. 
\end{proof}

For the case when $Y$ is an irreducible nodal curve of genus $g\geq 2$ with a single node, defined over the complex numbers, 
X. Sun proves the following theorem in \cite[Theorem $1$]{sun1}.

\begin{thm}\label{existvb} 
Let $Y$ be an irreducible nodal curve defined over an algebraically closed field $k$ of arbitrary characteristic with normalisation $\tx$ of genus $g\geq0$.
Let $\mc{Q}$ be an invertible sheaf on $Y$ of degree $d$.
There exists a semistable locally free sheaf on $Y$ of rank $r$ and determinant $\mc{Q}$.
\end{thm}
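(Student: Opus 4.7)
The plan is a two-step construction: first produce a semistable locally free sheaf on $Y$ of rank $r$ whose pullback to $\tx$ already has determinant $\pi^{*}\mc{Q}$, and then tensor by a carefully chosen line bundle lying in $\ker(\pi^{*}\colon \mr{Pic}(Y)\to \mr{Pic}(\tx))$ to correct the determinant on $Y$ itself. I would split the first step according to the genus $g$ of $\tx$.

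For $g\geq 1$, apply Theorem \ref{existvbsmdet} to obtain a semistable rank $r$ locally free sheaf $\mc{E}$ on $\tx$ with $\det(\mc{E})\cong \pi^{*}\mc{Q}$. Endow $\mc{E}$ with the generalised parabolic structure $(\ul{\Lambda},\ul{\alpha})$ of Definition \ref{tfgpb} (weights $(0,1)$) and form the associated sheaf $\phi(\mc{E})$ on $Y$. By Lemma \ref{phirkdeg}, $\phi(\mc{E})$ is locally free of rank $r$ and degree $d$; by Lemma \ref{in01}, $\pi^{*}\phi(\mc{E})\cong \mc{E}$ and hence $\pi^{*}\det(\phi(\mc{E}))\cong \pi^{*}\mc{Q}$; and by Lemma \ref{gpbsemistable} the GPB $(\mc{E},\ul{\Lambda},\ul{\alpha})$ is parabolic semistable. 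The crucial step is to upgrade parabolic semistability of the GPB to slope-semistability of $\phi(\mc{E})$ on $Y$; this I would carry out via Bhosle's correspondence \cite{B1}, noting that any proper subsheaf $\mc{F}\subset \phi(\mc{E})$ produces $\pi^{*}\mc{F}\subset \mc{E}$ with an induced parabolic substructure, and the parabolic-slope inequality for $(\mc{E},\ul{\Lambda},\ul{\alpha})$ then forces $\mu(\mc{F})\leq d/r$.

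For $g=0$, Proposition \ref{theratcase} directly supplies a semistable rank $r$ locally free sheaf $\mc{G}$ on $Y$ of degree $d$. Since $\mr{Pic}(\mathbf{P}^{1})$ is determined by the degree, $\pi^{*}\det(\mc{G})\cong \pi^{*}\mc{Q}$ holds automatically. In either case we end up with a semistable rank $r$ locally free sheaf $\mc{G}$ on $Y$ satisfying $\pi^{*}\det(\mc{G})\cong \pi^{*}\mc{Q}$.

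To finish, set $\mc{N}:=\mc{Q}\otimes \det(\mc{G})^{-1}\in \ker(\pi^{*}\colon \mr{Pic}(Y)\to \mr{Pic}(\tx))$. The short exact sequence $1\to \mo_{Y}^{*}\to \pi_{*}\mo_{\tx}^{*}\to \bigoplus_{x\in J} k^{*}_{x}\to 1$ on $Y$ yields in cohomology the isomorphism $\ker(\pi^{*})\cong (k^{*})^{\gamma}$, which is $r$-divisible because $k$ is algebraically closed. Choose $\mc{M}\in\ker(\pi^{*})$ with $\mc{M}^{r}\cong \mc{N}$. Then $\mc{G}\otimes \mc{M}$ is locally free of rank $r$ on $Y$, remains semistable since tensoring by a line bundle shifts all slopes uniformly, and has determinant $\det(\mc{G})\otimes \mc{M}^{r}\cong \mc{Q}$. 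The main obstacle in the whole argument is the translation from parabolic semistability of $(\mc{E},\ul{\Lambda},\ul{\alpha})$ to ordinary semistability of $\phi(\mc{E})$ on the singular curve $Y$; the other steps amount to bookkeeping with the lemmas already established in this section.
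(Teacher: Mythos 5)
Your proposal is correct and follows essentially the same route as the paper: the genus split between Proposition \ref{theratcase} and the GPB construction from Theorem \ref{existvbsmdet}, the reduction of semistability of $\phi(\mc{E})$ to parabolic semistability via Bhosle's correspondence (the paper cites \cite[Proposition $4.2$]{B1} for exactly this step), and the final twist by an $r$-th root in $\ker(\pi^{*})\cong (k^{*})^{\gamma}$ using that $k$ is algebraically closed.
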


\begin{proof}
If $\tx$ has genus $0$, then there exists a semistable locally free sheaf $\mc{F}$ on $Y$ by Proposition \ref{theratcase}. 
By Grothendieck's theorem, in this case, $\pi^*\mc{Q} \cong \mo_{\tilde{Y}}(d) \cong \pi^*\det \mc{F}$.
For $\tx$ having genus $g \geq 1$, by Theorem \ref{existvbsmdet}, there exists a semistable locally free sheaf $\mc{E}$ of rank $r$ and determinant $\pi^*\mc{Q}$ on $\tx$.
Let $(\mc{E},\underline{\Lambda},\underline{\alpha})$ be the generalised parabolic bundle as in Definition \ref{tfgpb} and $\phi(\mc{E})$ be the corresponding torsion-free sheaf.
By Lemma \ref{phirkdeg}, this is a locally free sheaf on $Y$ of rank $r$ and degree $d$.
The generalised parabolic bundle $(\mc{E},\underline{\Lambda},\underline{\alpha})$ is semistable by Lemma \ref{gpbsemistable}.
Then by {\cite[Proposition $4.2$]{B1}}, $\phi(\mc{E})$ is semistable. Denote by $\mc{F}:=\phi(\mc{E})$. Observe by Lemma \ref{in01}
that $\pi^*\det \mc{F} \cong \pi^*\mc{Q}$.

\vspace{0.2 cm}
For any $x \in Y$, denote by $\tilde{\mathcal{O}}_{Y,x}$ the integral closure of $\mathcal{O}_{Y,x}$. 
For any $x_i \in J$, $\tilde{\mathcal{O}}_{Y,x_i}^*/\mc{O}_{Y,x_i}^* \cong k^*$.
By \cite[Ex. II.$6.9$]{H}, we have a short exact sequence 
  
\begin{equation}\label{in10}
0 \to \oplus_{x_i \in J} k^* \to \mr{Pic}(Y) \xrightarrow{\pi^*} \mr{Pic}(\tx) \to 0.
\end{equation}

\noindent Since pull-back commutes with tensor product, \[\pi^*(\mc{Q} \otimes \det(\mc{F})^{-1})  \cong \pi^{*}(\mc{Q}) \otimes \pi^{*}(\mr{det}(\mc{F})^{-1}) \cong \mo_{\tx}.\]
Therefore, the invertible sheaf $\mc{R} := \mc{Q} \otimes_{\mc{O}_{Y}}(\mr{det}(\mc{F})^{-1})$ is in the kernel of $\pi^{*}$.
As $k$ is algebraically closed, the morphism 
\[[r]:\oplus_{x_i \in Q} k^* \to \oplus_{x_i \in Q} k^* ; \ \ \ [r](a_1,a_2, \dots a_n) = (a_1^r, a_2^r, \dots a^r_n)\] 
\noindent is surjective.
Therefore there exists an invertible sheaf, say $\mc{R}'$ on $Y$ such that $\mc{R} \cong \mc{R}'^{\otimes r}$. 

\vspace{0.2 cm}
Let $\mc{G} := \mc{F} \otimes \mc{R}'$.
It is easy to see that $\mc{G}$ is locally free and as twisting with an invertible sheaf does not change local freeness or the rank, $\mr{rank}(\mc{G}) = r$.
Since stability is also preserved under twisting with an invertible sheaf, $\mc{G}$ is semistable. 
Moreover, 
\[\det(\mc{G}) \cong \det(\mc{F} \otimes \mc{R}') \cong \det(\mc{F}) \otimes \mc{R}'^{\otimes r} \cong \det(\mc{F}) \otimes \mc{R} \cong \mc{Q}.\]
The degree of a locally free sheaf is the same as that of its determinant, hence $\mr{deg}(\mc{G}) = d$. 
This proves the theorem. 
\end{proof}

\begin{note}\label{n3}
Keep Notations \ref{n} and \ref{nb1}, Lemma \ref{bh26} and Notations \ref{nb2}. 
Denote by $Y_1,\dots,Y_N$ the irreducible components of the generalised tree-like curve $X_k$. 
For $1 \leq i \leq N$, denote by $Y_{\nu(i)}$ the unique component in $B(i)$ which intersects $Y_i$.
\end{note}

\vspace{0.2 cm}
Now we prove the main result of this section.

\vspace{0.2 cm}

\begin{thm}\label{existvbdettl}
Keep Notations \ref{n3}.
There exists a Gieseker semistable locally free sheaf $\mc{F}_k$ of 
rank $r$ on $X_k$ with $\det(\mc{F}_k) \cong \mc{L}_k$.
\end{thm}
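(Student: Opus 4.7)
The plan is to build $\mc{F}_k$ component-by-component and glue along the nodes, then verify the $\lambda$-semistability inequality of Definition \ref{lsemis} for a suitable polarisation so that Theorem \ref{bh24} applies.

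For each irreducible component $Y_i$ I first produce a rank-$r$ semistable locally free sheaf $\mc{F}_i$ with $\det(\mc{F}_i) \cong \mc{L}_k|_{Y_i}$. If $Y_i \simeq \mathbf{P}^1$, the hypothesis in Notation \ref{n} forces $d_i := \deg(\mc{L}_k|_{Y_i})$ to be divisible by $r$, and $\mc{F}_i := \mo_{Y_i}(d_i/r)^{\oplus r}$ works by Remark \ref{ratmul}; if $Y_i$ is smooth of genus $\ge 1$, apply Theorem \ref{existvbsmdet}; if $Y_i$ is irreducible nodal, apply Theorem \ref{existvb}. I then fix the ordering of components from Lemma \ref{bh26} and glue: for each $1 \le i < N$, let $x_i$ be the unique node $Y_i \cap Y_{\nu(i)}$ and pick any $k$-linear isomorphism $\mc{F}_i \otimes k(x_i) \xrightarrow{\sim} \mc{F}_{\nu(i)} \otimes k(x_i)$. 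Because the dual graph of $X_k$ is a tree, these $N-1$ fibre identifications involve no cocycle condition and yield a rank-$r$ locally free sheaf $\mc{F}_k$ on $X_k$ with $\mc{F}_k|_{Y_i} \cong \mc{F}_i$, hence $\det(\mc{F}_k)|_{Y_i} \cong \mc{L}_k|_{Y_i}$ for every $i$. An iterated Mayer--Vietoris argument (each gluing at a single node contributes a surjective $k^{*} \oplus k^{*} \twoheadrightarrow k^{*}$) shows that the restriction map $\p(X_k) \hookrightarrow \bigoplus_i \p(Y_i)$ is injective for a tree-like curve, so the componentwise identification upgrades to $\det(\mc{F}_k) \cong \mc{L}_k$ globally; were the kernel nontrivial, the $r$-th-root twist argument from the last paragraph of the proof of Theorem \ref{existvb} would absorb it without disturbing rank or componentwise semistability.

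The main obstacle is verifying the $\lambda$-semistability of $\mc{F}_k$ in the sense of Definition \ref{lsemis}. The short exact sequence
\[0 \to \mc{F}_k \otimes \mo_{G(i)} \to \bigoplus_{Y_j \in G(i)} \mc{F}_k \otimes \mo_{Y_j} \to \bigoplus_{P \in G(i)^0} k(P)^{\oplus r} \to 0,\]
together with $|G(i)^0| = |G(i)| - 1$ (since $G(i)$ is a connected subtree), rewrites the inequalities of Definition \ref{lsemis} as
\[\Bigl(\sum_{Y_j \in G(i)} \lambda_j\Bigr)\chi(\mc{F}_k) \le \chi(\mc{F}_k \otimes \mo_{G(i)}) \le \Bigl(\sum_{Y_j \in G(i)} \lambda_j\Bigr)\chi(\mc{F}_k) + r.\]
Combining the componentwise semistability of $\mc{F}_k$, the identity $\chi(\mc{F}_k) = \chi(\mc{F}_k \otimes \mo_{G(i)}) + \chi(\mc{F}_k \otimes \mo_{B(i)}) - r$ at the single node joining $G(i)$ to $B(i)$, and Lemma \ref{bh27} to absorb twists by divisors supported inside $G(i)$ or $B(i)$, one picks the $\lambda_j$ proportional to $\chi(\mc{F}_k \otimes \mo_{Y_j})$ (rescaled so $\sum_j \lambda_j = 1$) and verifies the bounds for every $i$. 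Theorem \ref{bh24} then yields Seshadri semistability of $\mc{F}_k$ with respect to $\lambda$, and for the polarisation $\mo_{X_k}(1)$ chosen so that $\lambda_j \propto \deg(\mo_{X_k}(1)|_{Y_j})$, the Seshadri slope and the reduced Hilbert polynomial induce the same ordering on subsheaves, so Gieseker semistability follows, completing the proof.
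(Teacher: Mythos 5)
Your construction of $\mc{F}_k$ by gluing componentwise semistable sheaves along the separating nodes, and your justification that $\det(\mc{F}_k)\cong\mc{L}_k$ via injectivity of $\p(X_k)\to\bigoplus_i\p(Y_i)$ for a tree, match the paper (and in fact make explicit a point the paper passes over quickly). The gap is in the semistability verification. First, $\lambda_j\propto\chi(\mc{F}_k\otimes\mo_{Y_j})$ need not define a polarisation at all: these Euler characteristics can be zero or negative for components of high genus and small degree, while Definition \ref{dfstab} requires $0\le\lambda_j\le1$ and $\sum_j\lambda_j=1$. Second, even granting such a $\lambda$, your rewritten inequality
\[
\Bigl(\sum_{Y_j\in G(i)}\lambda_j\Bigr)\chi(\mc{F}_k)\le\chi(\mc{F}_k\otimes\mo_{G(i)})\le\Bigl(\sum_{Y_j\in G(i)}\lambda_j\Bigr)\chi(\mc{F}_k)+r
\]
does not follow from componentwise semistability together with the additivity identities you list: it pins each partial sum $\sum_{Y_j\in G(i)}\chi(\mc{F}_k\otimes\mo_{Y_j})$ into a window of width exactly $r$, and the glued sheaf has no reason to land there, since the degrees $\deg(\mc{L}_k|_{Y_j})$ are essentially arbitrary. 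Third, the $\lambda$ you need at the last step --- the one for which Seshadri semistability implies Gieseker semistability --- is forced to be $\lambda_j=\alpha_1(\mo_{Y_j})/\alpha_1(\mo_{X_k})$, i.e.\ proportional to $\deg(\mo_{X_k}(1)|_{Y_j})$ for the given ample sheaf; you cannot simultaneously take $\lambda_j\propto\chi(\mc{F}_k\otimes\mo_{Y_j})$ to force the inequality and $\lambda_j\propto\deg(\mo_{X_k}(1)|_{Y_j})$ for the Gieseker comparison.

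The missing idea is the paper's use of the ambient regular model: twist by $\mo_{X_R}\bigl(\sum_i a_iY_i\bigr)$ restricted to $X_k$. By Lemma \ref{bh27} such a twist leaves $\chi(\mc{F}_k)$ unchanged, while (using $Y_{n_0}.Y_{\nu(n_0)}=1$ and the formula for $Y_{n_0}^2$) it replaces $\sum_{Y_j\in G(n_0)}\chi(\mc{F}_k\otimes\mo_{Y_j})$ by the same quantity minus $ra_{n_0}$. Since the target window has width exactly $r$, the Euclidean algorithm produces an integer $a_{n_0}$ landing the partial sum in the window, and one checks recursively, in the ordering of Lemma \ref{bh26}, that later twists do not disturb the inequalities already achieved. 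This yields $\lambda$-semistability of the twisted sheaf for the Gieseker polarisation, hence Seshadri and then Gieseker semistability; finally, because Gieseker semistability is invariant under tensoring with a line bundle, one untwists by $\mo_{X_R}\bigl(\sum_i a_iY_i\bigr)^{\vee}$ to conclude that the original $\mc{F}_k$ --- the one with the correct determinant --- was Gieseker semistable all along. Without this twist-and-untwist mechanism your argument does not close.
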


\begin{proof}
By assumption $X_k = \cup_{i=1}^{N} Y_i$ is a  generalised tree-like curve. 
Hence the irreducible components $Y_i$ are rational, smooth or irreducible nodal curves. 
Denote by $\mc{L}_i := \mc{L}_k |_{Y_i}$. 
In the case $Y_i$ is rational, there exists a slope semistable locally free sheaf $\mc{F}_i$  of rank $r$ 
with determinant $\mc{L}_i$ on $Y_i$ since by assumption the degree of the restriction of $\mc{L}_k$ on the rational components is a multiple of $r$ (see Remark \ref{ratmul}).
If a component $Y_i$ is smooth (resp. irreducible nodal) there exists a slope semistable locally free sheaf $\mc{E}_i$  of rank $r$ 
with determinant $\mc{L}_i$ on $Y_i$  by Theorem \ref{existvbsmdet} (resp. Theorem  \ref{existvb}). 
Define $\mc{F}_k$ to be the locally free sheaf on $X_k$ obtained by glueing $\mc{F}_i$ for $1\leq i \leq N$, on the intersection points.
Then $\det(\mc{F}_k) = \mc{L}_k$. 
Our goal is to prove that $\mc{F}_k$ is in fact Gieseker semistable.

\vspace{0.2 cm}
We first show that for \emph{any} polarisation $\lambda:=(\lambda_1,\lambda_2,...,\lambda_N)$, there exist integers $a_1, a_2, \dots a_N$ such that  
$\mc{F}'_k :=\mc{F}_{k} \otimes_{R} \mo_{X_R}(\sum\limits_{i=1}^N a_i Y_i)$ is Seshadri semistable with respect to the polarisation $\lambda$.

\vspace{0.2 cm}
We now prove by recursion the existence of integers $a_1, a_2, \dots a_N$ such that  
$\mc{F}_{k} \otimes_{R} \mo_{X_R}(\sum\limits_{i=1}^N a_i Y_i)$ is $\lambda$-semistable i.e for all $1 \leq i \leq N$, $\mc{F}_{k} \otimes \mo_{X_R}(\sum\limits_{i=1}^N a_i Y_i)$,
satisfies the following inequality: 

\[ (\sum\limits_{Y_j \in G(i)} \lambda_j)\chi(\mc{F}_{k} \otimes \mo_{X_R}(\sum\limits_{i=1}^N a_i Y_i)) + r(|G(i)|-1) \le 
   \sum\limits_{Y_j \in G(i)} \chi(\mc{F}_{k} \otimes \mo_{X_R}(\sum\limits_{i=1}^N a_i Y_i) \otimes \mo_{Y_j}) \le \]
 \[\le (\sum\limits_{Y_j \in G(i)} \lambda_j)\chi(\mc{F}_{k} \otimes \mo_{X_R}(\sum\limits_{i=1}^N a_i Y_i)) +r|G(i)|.     \ \ \ \ (*) \]
 
 \vspace{0.2 cm}
\noindent By Theorem \ref{bh24} if a locally free sheaf is $\lambda$-semistable, then it is Seshadri semistable, so it suffices to obtain a $\lambda$-semistable sheaf.

\vspace{0.2 cm}
Note that as a consequence of the ordering given in Lemma \ref{bh26}, any locally free sheaf $\mc{F}_{k}$ is $\lambda$-semistable on $Y_N$ for any $a_N$.
Indeed, for  $i = N$, we have $\sum\limits_{i=1}^N \lambda_i=1$, $G(N)=X_k$ and $|G(N)|= N$. 
Hence the inequality $(*)$ can be written as,

 \[ (\sum\limits_{Y_j \in X_k} \lambda_j)\chi(\mc{F}_{k} \otimes \mo_{X_R}(a_N Y_N)) + r(N-1) \le 
   \sum\limits_{Y_j \in X_{k}} \chi(\mc{F}_{k} \otimes \mo_{X_R}(a_N Y_N) \otimes \mo_{Y_j}) \le \]
 \[\le (\sum\limits_{Y_j \in X_k} \lambda_j)\chi(\mc{F}_{k}\otimes \mo_{X_R}(a_N Y_N)) +rN.\] 
 
\noindent Moreover
\[ \sum\limits_{i=1}^N\chi( \mc{F}_{k} \otimes \mo_{X_R}(a_N Y_N) \otimes \mo_{Y_i}) = \chi(\mc{F}_{k}\otimes \mo_{X_R}(a_N Y_N)) + \sum\limits_{i=1}^{N-1} \chi(\mc{F}_{k}\otimes \mo_{X_R}(a_N Y_N)\otimes \mc{O}_{P_i}).\]
\noindent where $P_{i}$ are the nodes in $X_k$ connecting two irreducible components. 
Since there are $N-1$ such nodes and $\mr{dim}(\mc{F}_{k}\otimes \mo_{P_i}) = r$ for all $i$, 
we have by Lemma \ref{bh27} that
\[ \sum\limits_{i=1}^N\chi( \mc{F}_{k} \otimes \mo_{X_R}(a_N Y_N) \otimes \mo_{Y_i}) = \chi(\mc{F}_{k})+r(N-1).\]
Since $\chi(\mc{F}_{k} \otimes \mo_{X_R}(a_N Y_N)) = \chi(\mc{F}_{k})$ (Lemma \ref{bh27}), 
the inequality above is satisfied
for any $a_N$.
Therefore $\mc{F}_{k}\otimes \mo_{X_R}(a_N Y_N)$ is $\lambda$-semistable on $Y_N$.

\vspace{0.2 cm}
Assume that for some $n_0 \leq N$, $\mc{F}_{k}$ is $\lambda$-semistable on $Y_{n_0+1}, \dots Y_{N}$.
Therefore for $i = n_0$, we have 

\[\sum\limits_{Y_j \in G(n_0)} \deg(\mc{F}_{k} \otimes \mo_{X_R}(a_{n_0}Y_{n_0}) \otimes \mo_{Y_j})= \sum\limits_{Y_j \in G(n_0)} \left(\deg(\mc{F}_{k} \otimes \mo_{Y_{j}})+ ra_{n_0}Y_{n_0}.Y_j\right).\]

\noindent Note that for any $i$, $Y_{i}.Y_{\nu(i)} = 1$ and $\nu(i) > i$. By \cite[Proposition $9.1.21$]{QL}, $Y_{i}^2 = -Y_{i}.Y_{\nu(i)}-\sum\limits_{Y_j \in G(i)\backslash Y_{i}} Y_j.Y_{i}$. 
Therefore for $i = n_0$ we have   
\[\sum\limits_{Y_j \in G(n_0)} \left(\deg(\mc{F}_{k} \otimes \mo_{Y_j})+ ra_{n_0}Y_{n_0}.Y_j\right)
= \left(\sum\limits_{Y_j \in G(n_0)} \deg(\mc{F}_{k} \otimes \mo_{Y_j})\right)-ra_{n_0}.\]

\noindent Since the rank of $\mc{F}_{k}$ does not change after twisting with a invertible sheaf, the Euler characteristic depends only on the degree.
Furthermore for any $i$, the difference between the upper-bound and the lower bound in the inequality $(*)$ is equal to $r$.
Then by the Euclidean algorithm and Lemma \ref{bh27} there must exist an integer $a_{n_0}$ such that 
\[ (\sum\limits_{Y_j \in G(n_0)} \lambda_j)\chi(\mc{F}_{k} \otimes \mo_{X_R}(a_{n_0} Y_{n_0})) + r(|G(n_0)|-1) \le \]
 \[ \le  \sum\limits_{Y_j \in G(n_0)} \chi(\mc{F}_{k} \otimes \mo_{X_R}( a_{n_0} Y_{n_0}) \otimes \mo_{Y_j}) 
 \le (\sum\limits_{Y_j \in G(n_0)} \lambda_j)\chi(\mc{F}_{k} \otimes \mo_{X_R}(a_{n_0} Y_{n_0})) +r|G(n_0)|.  \]

\noindent Hence, $\mc{F}_{k} \otimes \mo_{X_R}(a_{n_0}Y_{n_0})$ is $\lambda$-semistable at $Y_{n_0}$.

\vspace{0.2 cm}
Note that $\mc{F}_{k} \otimes \mo_{X_R}(a_{n_0}Y_{n_0})$ is also $\lambda$-semistable on $Y_t$ for all $n_0<t<N$.
This is because $Y_{n_0}$ does not intersect any curve in $\ov{B(n_0)\backslash Y_{\nu(n_0)}}$, neither does any curve in $G(n_0)$ (the only curve in $G(n_0)$ that intersects $B(n_0)$ is $Y_{n_0}$).
By Lemma \ref{bh27}, this implies 

\[\sum\limits_{Y_j \in G(n_0) \cup Y_{\nu(n_0)}} \chi(\mc{F}_{k} \otimes \mo_{X_R}(a_{n_0}Y_{n_0}) \otimes \mo_{Y_j})=
\sum\limits_{Y_j \in G(n_0) \cup Y_{\nu(n_0)}} \chi(\mc{F}_{k} \otimes \mo_{Y_j}) .\]

\vspace{0.2 cm}
Furthermore, $\chi(\mc{F}_{k} \otimes \mo_{X_R}(a_{n_0}Y_{n_0}) \otimes \mo_{Y_j})= \chi(\mc{F}_{k} \otimes \mo_{Y_j})$ for any curve $Y_j \in \ov{B(n_0)\backslash Y_{\nu(n_0)}}$. 
Note that $G(n_0) \cup Y_{\nu(n_0)}$ is connected.
Hence, for any $t>n_0$, either $G(n_0) \cup Y_{\nu(n_0)}$ is entirely contained in $G(t)$ or in $B(t)$.
Therefore for  any $t>n_0$,
\[\sum\limits_{Y_j \in G(t)} \chi(\mc{F}_{k} \otimes \mo_{X_R}(a_{n_0}Y_{n_0})\otimes \mo_{Y_j})=\sum\limits_{Y_j \in G(t)} \chi(\mc{F}_{k} \otimes \mo_{Y_j}).\]
By hypothesis, $\mc{F}_{k}$ is $\lambda$-semistable at $Y_t$ for all $t>n_0$. 
Furthermore by Lemma \ref{bh27}, we have $\chi(\mc{F}_{k}) = \chi(\mc{F}_{k} \otimes \mo_{X_R}(a_{n_0}Y_{n_0}))$.
Since the sum of the Euler characteristics of $\mc{F}_{k}$ when restricted to curves in $G(t)$ is the same as that of $\mc{F}_{k} \otimes \mo_{X_R}(a_{n_0}Y_{n_0})$, this implies $\mc{F}_{k} \otimes \mo_{X_R}(a_{n_0}Y_{n_0})$ is also $\lambda$-semistable for all $t>n_0$. 

\vspace{0.2 cm} 
By recursion we can find integers $a_{i}$ for all $1\leq i<N$ such that $\mc{F}_{k} \otimes \mo_{X_R}(\sum\limits_{i=1}^{N} a_{i}Y_{i})$ is $\lambda$-semistable on $Y_i$. 
Therefore the locally free sheaf $\mc{F}_{k}':= \mc{F}_{k} \otimes \mo_{X_R}(\sum\limits_{i=1}^{N} a_{i}Y_{i})$ is $\lambda$-semistable. 
Finally by Theorem \ref{bh24} it is also Seshadri semistable with respect to the polarisation $\lambda$.
Now set $\lambda_{i} = \frac{\alpha_{1}(\mc{O}_{Y_i})} {\alpha_{1}(\mc{O}_{X_k})}$
where $\alpha_1(-)$ is the leading coefficient of the corresponding Hilbert polynomial $P(-)(t) :=  \displaystyle\sum\limits_{i=0}^{d}\alpha_{i}(-)\dfrac{t^{i}}{i!}$.
It is a standard computation to check that with this choice of $\lambda_{i}$, $\mc{F}'_{k}$ is also Gieseker semistable (see for example \cite[Lemma $3.2.9$]{IK}). 
However the determinant of $\mc{F}_k^{'}$ is not $\mc{L}_k$. 
Since twisting with an invertible sheaf does not change Gieseker semistability, we can twist $\mc{F}_{k}'$ with the dual of $\mo_{X_R}(\sum\limits_{i=1}^{N} a_{i}Y_{i})$.
This proves that the sheaf $\mc{F}_k$ was Gieseker semistable to begin with.
\end{proof}

\section{Existence of locally free sheaves with fixed determinant on fibred surface}

\begin{note}\label{n2}
Keep Notations \ref{n}. 
In this section semistable always means Gieseker semistable.
Denote by $\hat{R}$ the completion of the discrete valuation ring $R$, by $\pi$ the uniformizer of $R$ and by $X_{\hat{R}}:= X_R \otimes_{R} \hat{R}$.  
Let $\mc{L}_{\hat{R}}:= \mc{L}_R \otimes _{R} \hat{R}$ and $\mc{L}_k := \mc{L}_{R}\otimes_{R}k$. 
Denote by $\mc{F}_k$ the semistable locally free sheaf on $X_{k}$ of rank $r$ and determinant $\mc{L}_k$ obtained in Theorem \ref{existvbdettl}.
For $n \geq 1$, let $R_{n} := R/\mf{m}^{n+1}$ where $\mf{m}$ denotes the maximal ideal of the ring $R$.
Denote by  $Y_{n}$, the spectrum of the ring $R_{n}$.
Let $X_{n} := X_{\hat{R}}\times_{\hat{R}}\msp R_{n}$. 
Since $X_{\hat{R}}$ is flat over $\msp (\hat{R})$, the scheme $X_{n}$ is flat over $\msp R_{n}$ and $X_{k}\simeq X_{n}\times_{R_{n}} \msp k$.
Therefore $X_{n}$ is a deformation of $X_{k}$ over $R_{n}$.
\end{note}

In this section we prove the existence of a semistable locally free sheaf 
$\mc{F}_R$ with determinant $\mc{L}_R$ on $X_{R}$ (see Theorem \ref{foundfk}). 
Using Grothendieck's formal function theorem, we first prove the existence of a locally free sheaf with 
determinant $\mc{L}_{\hat{R}}$ on $X_{\hat{R}}$ (see Proposition \ref{p5}). Note that for this we require the underlying ring to be complete (see \cite[Proposition $9.6$]{H} and \cite[Theorem $8.4.2$]{FGA}), therefore we use $\hat{R}$ instead of $R$. To ensure $\det(\mc{F}_R)$ is $\mc{L}_R$ we develop the obstruction theory to lifting the locally free sheaf $\mc{F}_k$ on $X_k$ with determinant $\mc{L}_k$ to a locally free sheaf on $X_n$ with determinant $\mc{L}_{\hat{R}|_{X_n}}$ for any $n>0$ (see Theorem \ref{noobsg}). Finally using Artin approximation and properties of semistability of sheaves, we obtain a locally free sheaf $\mc{F}_R$ on $X_R$ with determinant $\mc{L}_R$ which is semistable on geometric fibers.

\vspace{0.2 cm}
Recall the following general definitions. 

\begin{defi}\label{fsr}
We define a ringed space called the $\textit{formal spectrum}$ of $R$, 
denoted $\mc{Y} := (\mr{Spf}(R),\mc{O}_{\mc{Y}})$ as follows:
The topological space $\mr{Spf}(R)$ consists of the closed point $\mathrm{Spec}(k)$, with the discrete topology and the sheaf of rings $\mc{O}_{\mc{Y}}$ is $\hat{R}$.
By \cite[Theorem $21.1$]{HD} there is a noetherian formal scheme $\mc{X}$, flat over $\mc{Y}$, such that for each $n$, $X_{n} \simeq \mc{X}\times_{\hat{R}} Y_{n}$. 
\end{defi}

\vspace{0.2 cm}
We now discuss how the extensions of a locally free sheaf on $X_n$ relate to those of its determinant bundle.

\begin{defi} We have the following definitions:
\begin{enumerate}
\item The \emph{trace map} $\mr{tr}: M(r,\mc{O}_{X_{n+1}}) \to \mc{O}_{X_{n+1}}$ is given by taking the trace of the matrices.
\item We define the map 
 \begin{align*}
\pi^{n}\mr{tr}:\pi^{n}M(r, \mc{O}_{X_{n+1}}) & \rightarrow \pi^{n}\mc{O}_{X_{n+1}}.\\
\pi^{n}A & \mapsto \pi^{n}\mr{tr}(A)
\end{align*} 
\item We define the map 
\begin{align*}
1+ \pi^{n}\mr{tr}: 1 + \pi^{n}M(r, \mc{O}_{X_{n+1}})&\rightarrow 1 + \pi^{n}\mc{O}_{X_{n+1}}. \\
   1+ \pi^{n}A &\mapsto 1 + \pi^{n}\mr{tr}(A)
\end{align*}
\end{enumerate}
\end{defi}

Then we have the natural short exact sequence:

\begin{equation} \label{se1}
1 \to 1 + \mr{ker}(\pi^{n}\mr{tr}) \to 1 + \pi^{n}M(r, \mc{O}_{X_{n+1}}) \xrightarrow{1+ \pi^{n}\mr{tr}} 1 + \pi^{n}\mc{O}_{X_{n+1}} \to 1 .
\end{equation}

\vspace{0.2 cm}

\begin{lem}
Consider the natural surjective morphism 
\[\mr{SL}(r, \mc{O}_{X_{n+1}}) \xrightarrow{\alpha} \mr{SL}(r,\mc{O}_{X_{n}}).\]
Then $1 + \mr{ker}(\pi^{n}\mr{tr}) = \mr{ker}(\alpha)$. 
\end{lem}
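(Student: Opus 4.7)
The plan is a direct determinant computation, exploiting the fact that the kernel ideal of the reduction $\mc{O}_{X_{n+1}} \to \mc{O}_{X_n}$, namely the ideal generated by $\pi^n$, is square-zero in $\mc{O}_{X_{n+1}}$. First I would observe that a matrix $M \in \mr{SL}(r, \mc{O}_{X_{n+1}})$ lies in $\ker(\alpha)$ precisely when every entry of $M - I$ lies in this kernel ideal, i.e.\ when $M = I + \pi^n A$ for some $A \in M(r, \mc{O}_{X_{n+1}})$. Elements of $1 + \ker(\pi^n \mr{tr})$ also have this form by definition. Thus both sides of the claimed equality sit inside the subgroup $1 + \pi^n M(r, \mc{O}_{X_{n+1}})$, and the question reduces to determining which matrices of this form have determinant one.

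Next I would expand $\det(I + \pi^n A)$ via the Leibniz formula. Every non-identity permutation produces a monomial containing at least two entries of $\pi^n A$ (the two swapped off-diagonal positions already suffice), so each such term has $(\pi^n)^2$ as a factor and therefore vanishes. The identity permutation contributes $\prod_i (1 + \pi^n a_{ii})$, in which all mixed products of the diagonal corrections likewise vanish. Consequently
\[\det(I + \pi^n A) \;=\; 1 + \pi^n \mr{tr}(A) \;=\; (1 + \pi^n \mr{tr})(I + \pi^n A),\]
so the restriction of $\det$ to $1 + \pi^n M(r, \mc{O}_{X_{n+1}})$ coincides with the map $1 + \pi^n \mr{tr}$ introduced above.

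Finally, the condition $M \in \mr{SL}(r, \mc{O}_{X_{n+1}})$ for $M = I + \pi^n A$ then translates to $(1 + \pi^n \mr{tr})(I + \pi^n A) = 1$, equivalently $\pi^n A \in \ker(\pi^n \mr{tr})$; both inclusions $\ker(\alpha) \subseteq 1 + \ker(\pi^n \mr{tr})$ and $1 + \ker(\pi^n \mr{tr}) \subseteq \ker(\alpha)$ follow at once. There is no genuine obstacle here: the entire argument hinges on the square-zero observation that kills every cross-term in the Leibniz expansion, after which the equality is formal. The only small point worth verifying along the way is that $\pi^n \mr{tr}$ is well defined on $\pi^n M(r, \mc{O}_{X_{n+1}})$ (an element $\pi^n A = \pi^n B$ forces $\pi^n(A-B) = 0$, hence $\pi^n \mr{tr}(A-B) = \mr{tr}(\pi^n(A-B)) = 0$), so the statement of the lemma makes sense to begin with.
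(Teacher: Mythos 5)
Your argument is correct and is essentially the paper's own proof: both identify $\ker(\alpha)$ with matrices of the form $I+\pi^{n}A$ and use the square-zero property of the kernel ideal to collapse $\det(I+\pi^{n}A)$ to $1+\pi^{n}\mr{tr}(A)$, so that the determinant-one condition becomes exactly membership in $1+\ker(\pi^{n}\mr{tr})$. Your write-up merely makes explicit the Leibniz-formula bookkeeping and the well-definedness of $\pi^{n}\mr{tr}$, which the paper leaves implicit.
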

 
\begin{proof} Note that the morphism $\alpha$ is a group homomorphism because it is induced by the ring homomorphism $\mc{O}_{X_{n+1}} \to \mc{O}_{X_n}$.
Let $N := (a_{ij})$ be a matrix in $\mr{SL}(r, \mc{O}_{X_{n+1}})$ with image the identity matrix in $\mr{SL}(r, \mr{O}_{X_{n}})$. 
Then $a_{ij} = \pi^{n}b_{ij}$ for $i \neq j$ with $b_{ij} \in \mc{O}_{X_{n+1}}$ and $a_{ii}  = 1+\pi^{n}b_{ii}$.  
Since $\pi^{n + 1} = 0$ in $\mc{O}_{X_{n+1}}$ and $(a_{ij}) \in \mr{SL}(r, \mc{O}_{X_{n+1}})$,

\[1 = \mr{det}(a_{ij}) = 1 + \pi^{n}\sum(b_{ii}).\]

\noindent Hence $\pi^{n}\sum(b_{ii})$ must be $0$.

\vspace{0.2 cm}
\noindent Since $\mr{tr}(N-\mr{Id})=\pi^n\sum_i b_{ii}$, this implies $N \in 1+\ker(\pi^n \mr{tr})$.
Hence $\mr{ker}(\alpha) \subseteq 1+\ker(\pi^n \mr{tr})$.
The reverse inclusion $1+\ker(\pi^n \mr{tr}) \subseteq \mr{ker}(\alpha)$ is direct.
\end{proof}

This gives us the following short exact sequence

\begin{equation}\label{se2}
 1 \to 1+ \mr{ker}(\pi^{n}\mr{tr}) \to \mr{SL}(r, \mc{O}_{X_{n+1}}) \to \mr{SL}(r,\mc{O}_{X_{n}}) \to 1.  
\end{equation}

\vspace{0.2 cm}

\begin{defi}
Recall the determinant map $\mr{det}: \mr{GL}(r, \mc{O}_{X_{n+1}}) \to \mc{O}^{\times}_{X_{n+1}}$ given by taking the determinant of the matrices.
Since  $\mr{SL}(r, \mc{O}_{X_{n+1}})$ are the matrices with determinant $1$, we have the following  exact sequence

\begin{equation}\label{se3}
1 \to  \mr{SL}(r, \mc{O}_{X_{n+1}}) \to \mr{GL}(r,\mc{O}_{X_{n+1}}) \xrightarrow{\mr{det}} \mc{O}_{ X_{n+1}}^{\times} \to 1.
\end{equation}
\end {defi}

\vspace{0.2 cm}

Using sequences (\ref{se1}), (\ref{se2}), (\ref{se3}) we obtain the following diagram  

  \[\begin{diagram}\label{d1}
        1 &  \rInto & 1 + \mr{ker}(\pi^{n}\mr{tr}) & \rTo & \mr{SL}(r, \mc{O}_{X_{n+1}}) & \rOnto & \mr{SL}(r, \mc{O}_{X_{n}}) & \rTo & 1\\
  & &\dInto &\circlearrowright&\dInto &\circlearrowright&\dInto & & \\
       1 &  \rInto & 1 + \pi^{n}M(r, \mc{O}_{X_{n+1}}) & \rTo & \mr{GL}(r, \mc{O}_{X_{n+1}}) & \rOnto & \mr{GL}(r, \mc{O}_{X_{n}}) & \rTo & 1 \\
 & &\dOnto^{1 + \pi^{n}\mr{tr}} &\circlearrowright&\dTo^{\mr{det}} &\circlearrowright&\dTo^{\mr{det}} & & \\
     1 &  \rInto & 1 + \pi^{n}\mc{O}_{X_{n+1}}  & \rTo & \mc{O}_{X_{n+1}}^{\times} & \rTo & \mc{O}_{X_{n}}^{\times} & \rOnto & 1 \\
   \end{diagram}.\]

\vspace{0.2 cm}
\noindent Note that the short exact sequence (\ref{se1}) splits i.e.  there exists a section $\phi$ to the trace map, 
\[\phi: \mc{O}_{X_{n+1}}(U) \to  M(r, \mc{O}_{X_{n+1}})(U), \ \  \lambda \mapsto  \begin{pmatrix}
  \lambda & 0 &\cdots & 0 \\
  0 & 0 & \cdots & 0\\
  \vdots  & \vdots  & \ddots & \vdots  \\
  0 & 0 & \cdots & 0
 \end{pmatrix}
  \]
\noindent for all $U \subseteq X_{n+1}$. 
Since the short exact sequence (\ref{se1}) is split exact,  we have the following short exact sequence:
 \[ 1 \to H^{1}(1 + \mr{ker}(\pi^{n}\mr{tr})) \to H^{1}(1 + \pi^{n}M(r, \mc{O}_{X_{n+1}})) \to  H^{1}(1 + \pi^{n}\mc{O}_{X_{n+1}}) \to 1. \]

\noindent Similarly, the short exact sequence (\ref{se3}) splits i.e there exists a section $\psi$ to the determinant map given by 

\[\psi: \mc{O}^{\times}_{X_{n+1}}(U) \to  \mr{GL}(r, \mc{O}_{X_{n+1}})(U), \ \ \  \lambda \mapsto  \begin{pmatrix}
  \lambda & 0 &\cdots & 0 \\
  0 & 1 & \cdots & 0\\
  \vdots  & \vdots  & \ddots & \vdots  \\
  0 & 0 & \cdots & 1
 \end{pmatrix}
  \]
  
\noindent for all for all $U \subseteq X_{n+1}$. 

\vspace{0.2 cm}

The following diagram summarises all this. 
 
 \begin{equation}\label{d2}
  \begin{diagram}
      H^{1}(1 + \mr{ker}(\pi^{n}\mr{tr})) & \rTo & H^{1}(\mr{SL}(r, \mc{O}_{X_{n+1}})) & \rTo & H^{1}(\mr{SL}(r, \mc{O}_{X_{n}})) \\
  \dTo & \circlearrowright &\dTo &\circlearrowright & \dTo\\
       H^{1}(1 + \pi^{n}M(r, \mc{O}_{X_{n+1}})) & \rTo & H^{1}(\mr{GL}(r, \mc{O}_{X_{n+1}})) & \rOnto & H^{1}(\mr{GL}(r, \mc{O}_{X_{n}})) \\
\dOnto_{1+ \pi^{n}\mr{tr}}&\circlearrowright &\dTo_{\mr{det}} &\circlearrowright &\dTo_{\mr{det}} \\
     H^{1}(1 + \pi^{n}\mc{O}_{X_{n+1}})  & \rTo^{\psi_1} & H^{1}(\mc{O}_{X_{n+1}}^{\times}) & \rTo^{\psi_2} & H^{1}(\mc{O}_{X_{n}}^{\times}) 
   \end{diagram}
 \end{equation}
 
\vspace{0.2 cm} 
\noindent Here the north-east square is a diagram of pointed sets, all the other groups are abelian. Note that, $1 + \mr{ker}(\pi^{n}\mr{tr})$ is a sheaf of abelian groups. Hence by Grothendieck vanishing, $H^2(1 + \mr{ker}(\pi^{n}\mr{tr}))=0$, implying surjectivity of the middle right arrow. 
%\begin{lem}\label{l3}
%Let $X_{n}$ be as in Definition \ref{dfxn}
%Let $\mc{F}_{k}$ be the locally free sheaf on $X_{k}$ obtained in Corollary \ref{existvb} and denote by $\mc{L}_{k}$  its determinant.
%Then, there exists a collection of locally free sheaves $\mc{F}_{n}$ (resp. $\mc{L}_n$)  on $X_{n}$ such that $\mc{F}_{n+1}$ (resp. $\mc{L}_{n+1}$) is an extension of $\mc{F}_{k}$ over $R_{n}$. 
%\end{lem}

We can now prove the following: 

\vspace{0.2 cm}
\begin{thm}\label{noobsg}
Suppose there exists a locally free sheaf $\mc{F}_n$ on $X_n$ with determinant $\mc{L}_{n} := \mc{L}_{\hat{R}}\otimes_{\hat{R}}R_{n}$ which is an extension of $\mc{F}_k$.
Then there exists an extension $\mc{F}_{n+1}$ of the locally free sheaf $\mc{F}_{n}$ such that $\mr{det}(\mc{F}_{n+1}) \simeq \mc{L}_{n+1}$ where $\mc{L}_{n+1} := \mc{L}_{\hat{R}}\otimes_{\hat{R}}R_{n+1}$.
\end{thm}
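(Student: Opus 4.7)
The plan is to use the obstruction-theoretic diagram~(\ref{d2}) in two steps: first produce \emph{some} lift $\tilde{\mc{F}}_{n+1}$ of $\mc{F}_n$ to $X_{n+1}$ ignoring the determinant condition, then adjust the lift so that its determinant becomes exactly $\mc{L}_{n+1}$.

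First, the class $[\mc{F}_n] \in H^1(\mr{GL}(r, \mc{O}_{X_n}))$ admits a lift to $H^1(\mr{GL}(r, \mc{O}_{X_{n+1}}))$ provided the obstruction in $H^2$ of the abelian kernel $1 + \pi^n M(r, \mc{O}_{X_{n+1}})$ vanishes. That kernel is a coherent sheaf on the one-dimensional scheme $X_k$, so Grothendieck vanishing gives the required $H^2 = 0$ (this is the same vanishing already flagged in the paper for the subgroup $1 + \mr{ker}(\pi^n\mr{tr})$), and a lift $\tilde{\mc{F}}_{n+1}$ of $\mc{F}_n$ exists.

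Second, I would compare $\det(\tilde{\mc{F}}_{n+1})$ with $\mc{L}_{n+1}$ using the bottom row of diagram~(\ref{d2}). Both are lifts of $\mc{L}_n$, so their difference defines a class
\[\xi \in H^1\bigl(X_k,\, 1 + \pi^n \mc{O}_{X_{n+1}}\bigr).\]
The trace map $1 + \pi^n\mr{tr}$ in the left column admits the entry-wise splitting $\phi$, valid in any characteristic since no division by $r$ is used, and the splitting descends to cohomology. This furnishes a class $\eta \in H^1\bigl(X_k,\, 1 + \pi^n M(r, \mc{O}_{X_{n+1}})\bigr)$ with $(1 + \pi^n \mr{tr})(\eta) = \xi$.

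Finally I would modify $\tilde{\mc{F}}_{n+1}$ by $\eta$ at the \v{C}ech cocycle level: if $\{g_{ij}\}$ are transition cocycles for $\tilde{\mc{F}}_{n+1}$ and $\{1 + \pi^n A_{ij}\}$ is a \v{C}ech representative of $\eta$, I take $\mc{F}_{n+1}$ to be the locally free sheaf whose transition cocycles are $\{g_{ij}(1 + \pi^n A_{ij})\}$. Because $\{1 + \pi^n A_{ij}\}$ reduces to the trivial cocycle on $X_n$, the new sheaf still extends $\mc{F}_n$, and its determinant is $\det(\tilde{\mc{F}}_{n+1})$ multiplied by $\det(1 + \pi^n A_{ij}) = 1 + \pi^n \mr{tr}(A_{ij})$, where the higher-order terms in the determinant expansion vanish because $\pi^{2n}$ is killed in $\mc{O}_{X_{n+1}}$. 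Hence $\det(\mc{F}_{n+1}) \simeq \mc{L}_{n+1}$ as required. The main technical point I expect to check is this determinant computation together with the verification that the cocycle modification yields a well-defined isomorphism class of locally free sheaves; non-abelian complications with $\mr{GL}$-valued cohomology are sidestepped because the correcting cocycle lies entirely in the abelian subgroup $1 + \pi^n M(r, \mc{O}_{X_{n+1}})$.
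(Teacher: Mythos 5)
Your proposal follows the paper's proof essentially step for step: first lift $\mc{F}_n$ unconditionally using Grothendieck vanishing of $H^2$ on the one-dimensional $X_k$ (the paper phrases this via {\cite[Theorem $7.3$]{HD}}, with obstruction space $H^{2}(\Hc(\mc{F}_{k},\mc{F}_{k})\otimes \mf{m}^{n+1}/\mf{m}^{n+2})=0$), then measure the discrepancy between the determinant of that lift and $\mc{L}_{n+1}$ as a class $\xi$ in $H^{1}(1+\pi^{n}\mc{O}_{X_{n+1}})$, lift $\xi$ through the trace map, and act on the chosen lift; your identity $\det(1+\pi^{n}A)=1+\pi^{n}\mr{tr}(A)$ on the square-zero kernel is exactly the paper's ``compatibility of the torsor action with the determinant''. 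So the strategy is the same, only made explicit at the \v{C}ech cocycle level.

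The one step that needs repair --- in your write-up, and implicitly in the paper's --- is the cocycle modification. The set of lifts of $\mc{F}_n$ to $X_{n+1}$ is a torsor under $H^{1}$ of the \emph{twisted} kernel $\Hc(\mc{F}_{k},\mc{F}_{k})\otimes\mf{m}^{n+1}/\mf{m}^{n+2}$, not of the constant matrix sheaf: if $\{g_{ij}\}$ are transition cocycles of $\tilde{\mc{F}}_{n+1}$, then $g_{ij}(1+\pi^{n}A_{ij})\cdot g_{jk}(1+\pi^{n}A_{jk})=g_{ik}\bigl(1+\pi^{n}(\bar g_{jk}^{-1}A_{ij}\bar g_{jk}+A_{jk})\bigr)$, so the modified family is a cocycle only when $\bar g_{jk}^{-1}A_{ij}\bar g_{jk}+A_{jk}=A_{ik}$, i.e. when $\{A_{ij}\}$ is a cocycle for the endomorphism sheaf of $\mc{F}_k$. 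The entry-wise splitting $\phi$ (placing $\lambda$ in the top-left corner) is not conjugation-equivariant, so the $\eta$ it produces is a cocycle for the untwisted sheaf and your transition functions $\{g_{ij}(1+\pi^{n}A_{ij})\}$ will in general fail the cocycle condition. The fix costs nothing and avoids the splitting altogether: the trace $\Hc(\mc{F}_{k},\mc{F}_{k})\to\mc{O}_{X_{k}}$ is a surjection of sheaves (check it locally, where $\mc{F}_k$ is free, using your $\phi$), its kernel of traceless endomorphisms is coherent on the curve $X_k$, so its $H^{2}$ vanishes and $H^{1}(\mr{tr})$ is surjective in every characteristic, with no division by $r$. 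Choosing $\eta$ in $H^{1}(\Hc(\mc{F}_{k},\mc{F}_{k})\otimes\mf{m}^{n+1}/\mf{m}^{n+2})$ mapping to $\xi$ makes the rest of your argument, including the determinant computation (trace being conjugation-invariant), go through verbatim. A minor indexing remark: with the paper's convention $R_{n}=R/\mf{m}^{n+1}$, the kernel of $\mc{O}_{X_{n+1}}\to\mc{O}_{X_{n}}$ is $1+\pi^{n+1}M(r,\mc{O}_{X_{n+1}})$ and the higher terms of the determinant die because $\pi^{2n+2}=0$, not because $\pi^{2n}=0$ (which fails for $n=1$); this off-by-one is inherited from the paper and is harmless once corrected consistently.
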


\begin{proof}
Since $X_k$ is a curve, by Grothendieck's vanishing theorem,
\[H^{2}(\Hc_{X_{k}}(\mc{F}_{k},\mc{F}_{k})\otimes_k \mf{m}^{n+1}/\mf{m}^{n+2}) = 0 \] 
\noindent for all $n\geq0$.
Hence by {\cite[Theorem $7.3$]{HD}}, there is no obstruction to extending $\mc{F}_{n}$ to a coherent sheaf say, $\mc{F}'_{n+1}$ over $X_{n+1}$. 
Furthermore, the sheaf $\mc{F}'_{n+1}$ is in fact a locally free sheaf on $X_{n+1}$ (see {\cite[Exercise $7.1$]{HD}}).
Let $\mc{L}'_{n+1}:= \mr{det}(\mc{F}'_{n+1})$.
If $\mc{L}'_{n+1}\simeq \mc{L}_{n+1}$, then we are done.
Suppose not, we now show how we can modify the extension $\mc{F}'_{n+1}$ so that its determinant bundle is in fact isomorphic to $\mc{L}_{n+1}$.

\vspace{0.2 cm} 
%By Theorem \ref{lift} the extension $\mc{F}_{n+1}$ is an element in $H^{1}(\mr{GL}(r, \mc{O}_{X_{n+1}})$. 
%Denote by $[\mc{L}'_{n+1}]$ its determinant in $H^{1}(\mc{O}_{X_{n+1}}^{\times})$.
%Let us check that it  lifts $[\mc{L}_{n}]$.

By {\cite[Theorem $7.3$]{HD}} the set of extensions of $\mc{L}_{n}$ on $X_{n+1}$ is a torsor under the action of $H^{1}(\mc{O}_{X_{n+1}}^{\times})$.
Hence there exists $\gamma \in H^{1}(1+ \pi^{n}\mc{O}_{X_{n+1}})$ such that  $[\mc{L}_{n+1}] = \gamma \bullet  [\mc{L}'_{n+1}]$ , where $\bullet$ indicates the torsor action of $H^{1}(1+ \pi^{n}\mc{O}_{X_{n+1}})$.
Since the morphism $1+ \pi^{n}\mr{tr}$ is surjective, there exists a preimage of $\gamma$, say $\Gamma \in H^{1}(1+ \pi^{n}M(r , \mc{O}_{X_{n+1}}))$.
Denote by $\mc{F}_{n+1} := \Gamma \circ \mc{F}'_{n+1}$ in $H^{1}(\mr{GL}(r, \mc{O}_{X_{n+1}}))$ where $\circ$ indicates the torsor action in  $H^{1}(\mr{GL}(r, \mc{O}_{X_{n+1}}))$.
Since the torsor action is compatible with taking determinant, the commutativity of the lower left square of diagram (\ref{d2}) implies $\mr{det}(\mc{F}_{n+1}) = [\mc{L}_{n+1}]$.
\end{proof}

\begin{prop}\label{p5} 
There exists a locally free sheaf $\mc{F}_{\hat{R}}$ on $X_{\hat{R}}$ such that $\mc{F}_{k}\simeq \mc{F}_{\hat{R}}\otimes_{\hat{R}}k$ with determinant $\mc{L}_{\hat{R}}$. 
\end{prop}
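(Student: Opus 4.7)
The plan is to build a compatible system of locally free sheaves on the thickenings $\{X_n\}_{n\ge 0}$ with the correct determinant, interpret it as a coherent formal sheaf on the formal scheme $\mc{X}$ of Definition \ref{fsr}, and then algebraize it via Grothendieck's existence theorem. Set $\mc{F}_0 := \mc{F}_k$, which has determinant $\mc{L}_0 = \mc{L}_k$ by Theorem \ref{existvbdettl}. Assuming inductively that we have produced a locally free sheaf $\mc{F}_n$ on $X_n$ extending $\mc{F}_k$ with $\det(\mc{F}_n) \simeq \mc{L}_n := \mc{L}_{\hat{R}}\otimes_{\hat{R}} R_n$, Theorem \ref{noobsg} supplies a locally free extension $\mc{F}_{n+1}$ on $X_{n+1}$ with $\det(\mc{F}_{n+1}) \simeq \mc{L}_{n+1}$. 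The canonical identifications $\mc{F}_{n+1} \otimes_{R_{n+1}} R_n \simeq \mc{F}_n$ that are built into the extension assemble $\{\mc{F}_n\}$ into a coherent adic system on $\mc{X}$, and hence into a coherent formal sheaf $\hat{\mc{F}}$ on $\mc{X}$; the line bundles $\{\mc{L}_n\}$ assemble analogously into the formal completion of $\mc{L}_{\hat{R}}$.

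Next I would apply Grothendieck's existence theorem (\cite[Theorem $8.4.2$]{FGA}). Since $X_{\hat{R}} \to \Spec \hat{R}$ is proper (because $X_R$ is a fibred surface) and $\hat{R}$ is complete, every coherent formal sheaf on $\mc{X}$ algebraizes uniquely to a coherent sheaf on $X_{\hat{R}}$. This yields a coherent sheaf $\mc{F}_{\hat{R}}$ on $X_{\hat{R}}$ with $\mc{F}_{\hat{R}} \otimes_{\hat{R}} R_n \simeq \mc{F}_n$ for every $n$; the case $n=0$ gives $\mc{F}_{\hat{R}} \otimes_{\hat{R}} k \simeq \mc{F}_k$. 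Flatness of $\mc{F}_{\hat{R}}$ over $\hat{R}$ follows from the flatness of each $\mc{F}_n$ over $R_n$ together with the local criterion for flatness, and combining flatness over the dvr $\hat{R}$ with local freeness of the closed-fibre restriction $\mc{F}_k$ yields local freeness of $\mc{F}_{\hat{R}}$ of rank $r$.

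For the determinant, the line bundle $\det(\mc{F}_{\hat{R}})$ on $X_{\hat{R}}$ has formal completion whose $n$-th term is $\det(\mc{F}_n) \simeq \mc{L}_n$, so it coincides with the formal completion of $\mc{L}_{\hat{R}}$. The uniqueness part of Grothendieck's existence theorem then forces $\det(\mc{F}_{\hat{R}}) \simeq \mc{L}_{\hat{R}}$. The main difficulty, namely lifting the locally free sheaf infinitesimally while simultaneously pinning down its determinant, has already been resolved in Theorem \ref{noobsg}; the only genuinely new step here is the formal-to-algebraic passage, which goes through cleanly once one invokes the properness of $X_{\hat{R}} \to \Spec \hat{R}$ and the completeness of $\hat{R}$. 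A minor bookkeeping point to verify is that the isomorphisms $\det(\mc{F}_{n+1}) \otimes_{R_{n+1}} R_n \simeq \det(\mc{F}_n) \simeq \mc{L}_n$ produced by Theorem \ref{noobsg} can be chosen compatibly across $n$, which is automatic since in each step the torsor action of $H^1(1+\pi^n\mc{O}_{X_{n+1}})$ used in the proof of Theorem \ref{noobsg} is functorial with respect to reduction modulo $\mf{m}^{n+1}$.
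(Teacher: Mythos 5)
Your proof is correct and follows essentially the same route as the paper: assemble the compatible system $\{\mc{F}_n\}$ (produced inductively via Theorem \ref{noobsg}) into a coherent sheaf on the formal scheme $\mc{X}$ and algebraize it by Grothendieck's existence theorem, then read off the special fibre. You are in fact more explicit than the paper about why $\det(\mc{F}_{\hat{R}})\simeq\mc{L}_{\hat{R}}$ (the paper leaves this implicit), and your compatibility point about the isomorphisms $\det(\mc{F}_n)\simeq\mc{L}_n$ is genuine but resolvable, e.g.\ by Mittag--Leffler for the surjective system $H^0(X_{n+1},\mc{O}^{\times})\to H^0(X_n,\mc{O}^{\times})$.
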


\begin{proof}
By {\cite[Ch$2$, Proposition $9.6$]{H}}, we obtain a locally free sheaf $\hat{\mc{F}}$ on the formal scheme $\mc{X}$. 
Then by {\cite[Theorem $8.4.2$]{FGA}}, there exists a locally free sheaf $\mc{F}_{\hat{R}}$ on $X_{\hat{R}}$ such that for the flat morphism $i:\mc{X} \to X_{\hat{R}}$, $\hat{\mc{F}}$ is isomorphic to $i^*(\mc{F}_{\hat{R}})$. 
By the commutativity of the following diagram
 \[\begin{diagram}
    \mc{X}&\rTo^{i}&X_{\hat{R}}\\
    \uTo^{u}&\ruTo^f\\
    X_k
   \end{diagram}\]
   
\noindent and {\cite[Ch$2$, Proposition $9.6$]{H}}, we obtain $f^*(\mc{F}_{\hat{R}})=(i \circ u)^{*}\mc{F}_{\hat{R}}$ is isomorphic to $u^*\hat{\mc{F}}=\mc{F}_k$.
\end{proof}

Now we obtain a Gieseker semistable locally free sheaf $\mc{F}_{R}$ of rank $r$ on $X_R$ with determinant $\mr{det}(\mc{F}_R)\simeq \mc{L}_R$. 
We do this using the locally free sheaf $\mc{F}_{\hat{R}}$ on $X_{\hat{R}}$ obtained in Proposition \ref{p5} and Artin approximation. We refer the reader to \cite{artalg} for all the relevant definitions and results on Artin approximation.

\vspace{0.2 cm}
The following is a consequence of the Quot functor being locally of finite presentation.  

\begin{lem}\label{quotfp}
Let $f:X_R \to \msp (R)$ be a flat, projective morphism and $\mc{H}$ a free sheaf on $X_R$ of the form
$\oplus_{i=1}^N \mo_{X_R}$ for some $N$. 
Recall the Quot-functor $\mc{Q}uot_{X_R/\msp (R)/\mc{H}}^P$ for a fixed Hilbert polynomial $P$. 
Given a projective system $\{Z_i\}_{i \in I}$ of affine schemes,
the natural morphism 
\[\rho:\varinjlim\limits_{i \in I} \mc{Q}uot_{X_R/\msp (R)/\mc{H}}^P (Z_i) \to \mc{Q}uot_{X_R/\msp (R)/\mc{H}}^P (\varprojlim\limits_{i \in I} Z_i)\] is bijective.
\end{lem}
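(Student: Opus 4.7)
The plan is to reduce the claim to two standard inputs: representability of the Quot functor by a scheme of finite presentation, and the compatibility of $\Hom$-sets with filtered inverse limits of affine schemes for a finitely presented target. Setting $S := \msp(R)$, I would first invoke Grothendieck's foundational theorem: since $f: X_R \to S$ is flat and projective, $\mc{H} = \oplus_{i=1}^N \mo_{X_R}$ is coherent, and $R$ is noetherian, the functor $\mc{Q}uot^P_{X_R/S/\mc{H}}$ is representable by a projective $S$-scheme $Q$. In particular, $Q$ is of finite presentation over $S$.

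Next I would appeal to the general fact (EGA IV, Proposition $8.14.2$, or Stacks Project, Tag $01ZC$) that for any $S$-scheme $Q$ locally of finite presentation and any filtered projective system $\{Z_i\}_{i \in I}$ of affine $S$-schemes with limit $Z := \varprojlim_i Z_i$, the natural map
\[
\varinjlim_{i \in I} \Hom_S(Z_i, Q) \longrightarrow \Hom_S(Z, Q)
\]
is bijective. The bijectivity of $\rho$ then follows by transporting this identification across the representability isomorphism $\mc{Q}uot^P_{X_R/S/\mc{H}}(T) \cong \Hom_S(T, Q)$, which is functorial in the $S$-scheme $T$: applying it with $T = Z_i$ and passing to the direct limit on the left, and with $T = \varprojlim_i Z_i$ on the right, identifies $\rho$ with the bijection displayed above.

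There is no substantive obstacle in the argument: both ingredients — representability of the Quot scheme and the limit-preserving property of finitely presented $S$-schemes — are entirely classical, so the proof amounts to unpacking the definitions through the representability isomorphism. The only point that requires a brief remark is that projectivity of $Q$ over the noetherian base $S$ indeed implies finite presentation (not merely finite type), which is what is needed to invoke the general limit statement.
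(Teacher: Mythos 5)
Your proof is correct and follows essentially the same route as the paper: representability of the Quot functor by a projective (hence, over the noetherian base, finitely presented) $\msp(R)$-scheme, followed by the limit-preservation property of finitely presented schemes from EGA IV, Proposition $8.14.2$. The only cosmetic difference is that you spell out the transport through the representability isomorphism and note the filtered hypothesis on the index set, which the paper leaves implicit.
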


\begin{proof}
By \cite[Proposition $4.4.1$]{Ser}, the Quot-functor $\mc{Q}uot_{X_R/\msp (R)/\mc{H}}^P$ is represented by a projective $\msp(R)$-scheme. 
In particular, the natural morphism 
\[\phi: \mr{Quot}_{X_R/\msp (R)/\mc{H}}^P \to \msp(R)\] is of finite type. 
Since $\msp(R)$ is locally noetherian, the morphism $\phi$ is locally of finite presentation.
Then by \cite[Proposition $8.14.2$]{ega43} the Quot-functor $\mc{Q}uot_{X_R/\msp (R)/\mc{H}}^P$ is locally of finite presentation.
Hence the lemma follows.
\end{proof}

\vspace{0.2 cm}
We use the following lemma to prove Theorem \ref{foundfk}.

\begin{lem}\label{in07}
Consider the fiber product:
 \[\begin{diagram}
    X_{\hat{R}}&\rTo^{j_1}&X_R\\
    \dTo^{\hat{f}}&\square&\dTo^f\\
    \Spec (\hat{R})&\rTo^{j_0}&\msp (R)
   \end{diagram}\]
where $j_0$ is the natural morphism.
Denote by $i_n:X_n \hookrightarrow X_R$ the natural closed immersion.

\noindent If $\mathcal{F}_{R}$ is an invertible sheaf on $X_R$ satisfying $i_n^*{\mc{F}_R} \cong \mo_{X_n}$ for all $n \ge 1$, then $\mc{F}_R \cong \mo_{X_R}$.
\end{lem}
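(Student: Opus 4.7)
The plan is to pull $\mc{F}_R$ back via $j_1$ to an invertible sheaf $\mc{L} := j_1^{*}\mc{F}_R$ on $X_{\hat{R}}$, prove that $\mc{L}$ is trivial there, and then descend that triviality to $X_R$ via the faithfully flat base change $R \to \hat{R}$. Observe that $i_n$ factors as $X_n \hookrightarrow X_{\hat{R}} \xrightarrow{j_1} X_R$, so the hypothesis immediately yields $\mc{L}|_{X_n} \cong \mo_{X_n}$ for every $n \geq 1$.

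\textbf{Triviality on $X_{\hat{R}}$.} First I will inductively construct a \emph{compatible} family of trivializations $s_n : \mo_{X_n} \xrightarrow{\sim} \mc{L}|_{X_n}$. Given $s_n$, pick any trivialization $t$ of $\mc{L}|_{X_{n+1}}$, which exists by hypothesis. Since $X_n$ is proper over $\Spec R_n$ with connected special fibre $X_k$, one has $H^0(X_n, \mo_{X_n}) = R_n$ and therefore $H^0(X_n, \mo_{X_n}^{\times}) = R_n^{\times}$, so there is a unique unit $u_n \in R_n^{\times}$ with $s_n = u_n \cdot (t|_{X_n})$. Because $R_{n+1}^{\times} \twoheadrightarrow R_n^{\times}$ is surjective (both are local rings sharing the residue field $k$), I lift $u_n$ to $\tilde{u}_n \in R_{n+1}^{\times}$ and put $s_{n+1} := \tilde{u}_n \cdot t$; by construction $s_{n+1}|_{X_n} = s_n$. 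The compatible family $(s_n)$ defines a global section of the $\mf{m}$-adic formal completion of $\mc{L}$ on the formal scheme $\mc{X}$ of Definition~\ref{fsr}, and by Grothendieck's existence theorem \cite[Theorem~8.4.2]{FGA} (equivalently, the theorem on formal functions for $X_{\hat{R}} \to \Spec\hat{R}$) this algebraizes to a section $s \in H^0(X_{\hat{R}}, \mc{L})$. The vanishing locus $V(s)$ is closed in $X_{\hat{R}}$ and disjoint from $X_k$, so by properness its image in the two-point scheme $\Spec\hat{R}$ is a closed subset avoiding the closed point, hence empty. Therefore $s$ is a global trivialization and $\mc{L} \cong \mo_{X_{\hat{R}}}$.

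\textbf{Descent and main obstacle.} Set $M := H^0(X_R, \mc{F}_R)$, a finitely generated $R$-module by properness of $f$. Since $X_R/\Spec R$ has geometrically connected fibres (the connected $X_k$ sits over the algebraically closed $k$), $f_{*}\mo_{X_R} = \mo_{\Spec R}$, and flat base change gives $M \otimes_R \hat{R} \cong H^0(X_{\hat{R}}, \mc{L}) = \hat{R}$. Hence $M \otimes_R k \cong k$, Nakayama's lemma makes $M$ cyclic, and writing $M \cong R/I$ the relation $I\hat{R} = 0$ together with faithful flatness of $R \to \hat{R}$ forces $I = 0$, so $M \cong R$. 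A generator $s$ of $M$ induces $\phi : \mo_{X_R} \to \mc{F}_R$ sending $1 \mapsto s$; its base change $\phi \otimes_R \hat{R}$ sends $1$ to a generator of $\hat{R}$ and is therefore an isomorphism, and faithful flatness of $X_{\hat{R}} \to X_R$ (inherited by base change from $\hat{R}/R$) then forces $\ker \phi$ and $\mr{coker}\,\phi$ to vanish, giving $\mc{F}_R \cong \mo_{X_R}$. The delicate step is the inductive construction of the previous paragraph: naively lifting a fixed section of $\mc{L}|_{X_n}$ to $\mc{L}|_{X_{n+1}}$ is obstructed in $H^1(X_k, \mo_{X_k})$, which is non-zero in our setting since $X_k$ has genus $\geq 2$; the rescaling trick bypasses this obstruction entirely, replacing it with the trivial surjectivity of $R_{n+1}^{\times}$ onto $R_n^{\times}$.
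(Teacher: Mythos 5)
Your proof is correct, but it takes a genuinely different route from the paper. The paper disposes of this lemma in two lines by citing Artin's approximation theorem for the injectivity of the natural map $\mr{Pic}(X_R) \to \varprojlim_n \mr{Pic}(X_n)$, after which the hypothesis immediately forces $\mc{F}_R \cong \mo_{X_R}$. You instead reprove that injectivity by hand in the case at issue: the rescaling trick (using $H^0(X_n,\mo_{X_n}^\times)=R_n^\times$ and the surjectivity of $R_{n+1}^\times \to R_n^\times$) upgrades the termwise trivializations to a \emph{compatible} system, the theorem on formal functions algebraizes it to a section of $j_1^*\mc{F}_R$ over $X_{\hat R}$ that is nonvanishing by properness, and faithfully flat descent along $R\to\hat R$ (via $H^0(X_R,\mc{F}_R)\otimes_R\hat R\cong\hat R$ and Nakayama) transports the trivialization back to $X_R$. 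This is longer but more self-contained, and it correctly isolates the one genuinely delicate point — that the naive lifting of a fixed trivialization from $X_n$ to $X_{n+1}$ is obstructed in $H^1(X_k,\mo_{X_k})\neq 0$, which your unit-rescaling circumvents; it also makes visible that only completeness of $\hat R$ and flatness of $R\to\hat R$ are used, not the full strength of Artin's theorem. A few steps are asserted rather than proved ($H^0(X_n,\mo_{X_n})=R_n$ via flatness and $h^0(\mo_{X_k})=1$; $I\hat R=0$ because $\hat R/I\hat R\cong\hat R$ forces this in a domain; the base case $n=0$ of the induction, which follows from restricting the given trivialization on $X_1$ to $X_k$), but these are routine and do not constitute gaps.
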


\begin{proof}
By {\cite[Theorem $1.12$]{artalg}} the natural morphism \[i:\mr{Pic}(X_R) \to \varprojlim \mr{Pic}(X_n)\] is injective. Since $i(\mc{F}_R)=i(\mo_{X_R})$, we have $\mc{F}_R \cong \mo_{X_R}$.
\end{proof}

\vspace{0.2 cm}
Now we apply Artin approximation to our situation to obtain a Gieseker semistable locally free sheaf with determinant $\mc{L}_R$ on the surface $X_R$.

\begin{thm}\label{foundfk}
Let $\mc{F}_{\hat{R}}$ be the locally free sheaf on $X_{\hat{R}}$ with determinant $\mc{L}_{\hat{R}}:= j_1^*\mc{L}_R$ obtained using Proposition \ref{p5}. 
Then, there exists a geometrically stable locally free sheaf $\mc{F}_{R}$ on $X_R$ with determinant $\mc{L}_R$.
\end{thm}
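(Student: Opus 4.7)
The plan is to apply Artin approximation to a moduli functor that already incorporates a trivialisation of the determinant, so that the isomorphism $\det(\mc{F}_R)\cong \mc{L}_R$ is part of the approximated datum rather than being verified separately afterwards.

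First I would rigidify the problem by means of a Quot-scheme. Choose $m \gg 0$ so that $\mc{F}_{\hat R}(m)$ is globally generated on every fibre with vanishing $H^{1}$, and set $N := \chi(\mc{F}_{\hat R}(m))$. The evaluation map yields a surjection $q_{\hat R}:\mc{H}_{\hat R}:=\mo_{X_{\hat R}}^{\oplus N}\twoheadrightarrow \mc{F}_{\hat R}(m)$, i.e.\ a point of $\mc{Q}uot^{P}_{X_R/\msp R/\mc{H}}(\hat R)$ for $\mc{H}=\mo_{X_R}^{\oplus N}$ and $P$ the Hilbert polynomial of $\mc{F}_{\hat R}(m)$. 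This functor is locally of finite presentation by Lemma \ref{quotfp}; denote by $U$ its open subscheme parametrising locally free quotients of rank $r$, to which $q_{\hat R}$ belongs.

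Since $X_R \to \msp R$ is projective and flat, the relative Picard functor $\mr{Pic}_{X_R/R}$ is representable by an algebraic space locally of finite presentation over $R$, and passing to determinants defines a morphism $\det: U \to \mr{Pic}_{X_R/R}$. Let $V \to U$ be the $\mathbb{G}_{m}$-torsor whose $A$-valued points are pairs $(q,\phi)$, with $q \in U(A)$ and $\phi: \det(\mc{Q}_q)\xrightarrow{\sim}\mc{L}_A \otimes \mo_{X_R}(rm)$. Then $V$ is again locally of finite presentation over $R$, and the chosen isomorphism $\det \mc{F}_{\hat R}\cong \mc{L}_{\hat R}$ supplies a point $(q_{\hat R},\phi_{\hat R}) \in V(\hat R)$. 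Artin approximation (see \cite{artalg}) now produces $(q_R,\phi_R)\in V(R)$ whose image in $V(k)$ matches that of $(q_{\hat R},\phi_{\hat R})$. Setting $\mc{F}_R := \mc{Q}_{q_R}\otimes \mo_{X_R}(-m)$ gives a rank-$r$ locally free sheaf on $X_R$ with $\det(\mc{F}_R)\cong \mc{L}_R$ induced by $\phi_R$ and with $\mc{F}_R|_{X_k}\cong \mc{F}_k$. Since the closed fibre $\mc{F}_k$ is Gieseker semistable by Theorem \ref{existvbdettl} and Gieseker semistability is an open condition on the base in flat families of sheaves, the semistable locus of $\msp R$ is nonempty and open, hence all of $\msp R$; base-changing to geometric fibres yields the required conclusion.

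The delicate point is the preservation of the determinant. A direct application of Artin approximation to $\mr{Quot}$ alone would only yield $\det(\mc{F}_R)\cong \mc{L}_R$ modulo a fixed power of $\mf{m}$, and the kernel of $\mr{Pic}(X_R)\to \mr{Pic}(X_k)$ is in general nonzero, so this would not suffice. Lemma \ref{in07} is exactly the tool that promotes such infinitesimal information into a global isomorphism, provided one can verify triviality at every order $n$; the torsor $V\to U$ above offers the alternative of packaging the determinant trivialisation into the moduli problem itself, so that a single approximation modulo $\mf{m}$ already produces the desired iso. Either route requires that the relative Picard of $X_R/R$ (with its reducible, possibly singular, tree-like special fibre) be controlled enough to make $V$ locally of finite presentation, which is the main technical hurdle underneath the otherwise standard approximation argument.
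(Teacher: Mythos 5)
Your proposal is correct and reaches the desired conclusion, but it handles the one delicate step --- forcing $\det(\mc{F}_R)\cong\mc{L}_R$ --- by a genuinely different mechanism from the paper. The paper applies Artin approximation to the Quot functor alone (Lemma \ref{quotfp}), obtaining $\mc{F}_R$ with $i_n^*\mc{F}_R\cong {i'_n}^*\mc{F}_{\hat R}$, and then repairs the determinant a posteriori via Lemma \ref{in07}, i.e.\ the injectivity of $\mr{Pic}(X_R)\to\varprojlim\mr{Pic}(X_n)$ applied to $\det(\mc{F}_R)\otimes\mc{L}_R^{\vee}$. You instead rigidify the determinant \emph{before} approximating, replacing $\mr{Quot}$ by the functor $V$ of pairs $(q,\phi)$ with $\phi$ a trivialisation of $\det(\mc{Q}_q)\otimes\mc{L}^{-1}(-rm)$, so that a single approximation already carries the isomorphism $\det(\mc{F}_R)\cong\mc{L}_R$ with it. Your route buys robustness at exactly the point you flag: Artin's theorem produces a solution congruent to the formal one modulo a single prescribed power $\mf{m}^c$, whereas the a-posteriori argument needs agreement of determinants at every order $n$ to feed Lemma \ref{in07}; packaging $\phi$ into the moduli problem makes that issue evaporate. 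Conversely, the paper's route avoids what you call the ``main technical hurdle,'' and in fact that hurdle is milder than you suggest: you do not need representability of the relative Picard functor at all, since $V$ is just the open subfunctor of isomorphisms inside the total space of the pushforward of $\Hc(\det\mc{Q},\mc{L}(rm))$ over $U$, which is affine and of finite presentation over $U$ by cohomology and base change (using that the fibres of $X_R\to\msp(R)$ are reduced connected curves, so automorphisms of a line bundle are exactly $\mathbb{G}_m$). Two small imprecisions: $V\to U$ is a $\mathbb{G}_m$-torsor only over the locus of $U$ where the determinant has the correct class, which does not affect local finite presentation; and, since $\mc{F}_k$ is only Gieseker \emph{semistable} by Theorem \ref{existvbdettl}, openness gives a geometrically semistable (not stable) $\mc{F}_R$ --- your phrasing of the conclusion is the accurate one.
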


\begin{proof}
By Lemma \ref{quotfp}, the Quot functor is locally of finite presentation.  
Using Artin's approximation theorem \cite[Theorem $1.12$]{artalg}, 
we conclude that there exists a coherent sheaf $\mc{F}_{R}$ on $X_R$ such that  

\begin{equation}\label{eqn7}
 i_n^*\mc{F}_{R} \cong {i'_n}^*\mc{F}_{\hat{R}}, \ \ \ \forall  \ \ n \geq 1.
\end{equation}

\noindent  where $i'_n:X_n \to X_{\hat{R}}$ is the morphism induced by the natural morphism $\hat{R} \to R/m^n$.
By Theorem \ref{existvbdettl}, $\mc{F}_{k}\simeq \mc{F}_{R}\otimes_{R}k$. 
Since locally freeness and geometric semistability are open properties.
Therefore $\mc{F}_{R}$ is locally free and geometrically stable. 

\vspace{0.2 cm}
Let $\mc{L}:=\det(\mc{F}_{R})$.
Using the fact that determinant commutes with pull-back and the isomorphism (\ref{eqn7})
\[i_n^*\mc{L} \cong \mr{det} i_n^*\mc{F}_{R} \cong \mr{det} {i'_n}^*\mc{F}_{\hat{R}}.\]
\noindent By assumption $\det(\mc{F}_{\hat{R}}) \cong j_1^*\mc{L}_R$. 
Hence $\mr{det}{i'_n}^*\mc{F}_{\hat{R}}\cong i_n'^* \circ j_1^* \mc{L}_R$.
By the universal property of inverse limits $j_1 \circ i_n'=i_n$, hence 
$i_n'^* \circ j_1^* \mc{L}_R \cong i_n^*\mc{L}_R$.
Therefore for all $n \geq 1$, 
\[i_n^*(\mc{L} \otimes_{\mo_{X_R}} \mc{L}_R^{\vee}) \cong \mo_{X_n}.\] 
\noindent Hence by Lemma \ref{in07}, $\mc{L} \cong \mc{L}_R$ i.e., $\det(\mc{F}_{R}) \simeq \mc{L}_R$.
This proves the proposition.
\end{proof}


\begin{thebibliography}{10}

\bibitem{artalg}
M.~F. Artin.
\newblock Algebraic approximation of structures over complete local rings.
\newblock {\em Publications Math{\'e}matiques de l'IH{\'E}S}, 36:23--58, 1969.

\bibitem{BBN}
V~Balaji, P~Barik, and D.~S Nagaraj.
\newblock A degeneration of moduli of {H}itchin pairs.
\newblock {\em International Mathematics Research Notices},
  2016(21):6581--6625, 2015.

\bibitem{B1}
U.~Bhosle.
\newblock Generalised parabolic bundles and applications to torsionfree sheaves
  on nodal curves.
\newblock {\em Arkiv f{\"o}r matematik}, 30:187--215, 1992.

\bibitem{teix1}
M.~Teixidor~i Bigas.
\newblock Moduli spaces of (semi) stable vector bundles on tree-like curves.
\newblock {\em Mathematische Annalen}, 290(1):341--348, 1991.

\bibitem{FGA}
B.~Fantechi, L.~G{\"{o}}ttsche, L.~Illusie, S.~L. Kleiman, N.~Nitsure, and
  A.~Vistoli.
\newblock {\em Fundamental algebraic geometry. Grothendiec{k's} FGA explained,
  Mathematical Surveys and Monographs}, volume 123.
\newblock Amer. Math. Soc, 2005.

\bibitem{ega43}
A.~Grothendieck.
\newblock {\'E}l{\'e}ments de g{\'e}om{\'e}trie alg{\'e}brique (r{\'e}dig{\'e}s
  avec la collaboration de jean dieudonn{\'e}): {IV}. {\'e}tude locale des
  sch{\'e}mas et des morphismes de sch{\'e}mas, troisi{\`e}me partie.
\newblock {\em Publications Math{\'e}matiques de l'IH{\'E}S}, 28:5--255, 1966.

\bibitem{H}
R.~Hartshorne.
\newblock {\em Algebraic Geometry}, volume~52.
\newblock Graduate texts in Math, Springer Verlag, 1977.

\bibitem{HD}
R.~Hartshorne.
\newblock {\em Deformation Theory}, volume 257.
\newblock Graduate texts in Math, Springer Verlag, 2010.

\bibitem{HL}
D.~Huybrechts and M.~Lehn.
\newblock {\em The geometry of moduli spaces of sheaves}, volume~31.
\newblock Aspects of Mathematics, Vieweg, Braunshweig, 1997.

\bibitem{IK}
I.~Kaur.
\newblock {\em The {$C_1$} conjecture for the moduli space of stable vector
  bundles on a smooth projective curve}.
\newblock PhD thesis, Freie Universtaet Berlin, available online, 2016.

\bibitem{ink3}
I.~Kaur.
\newblock Smoothness of moduli space of stable torsionfree sheaves with fixed
  determinant in mixed characteristic.
\newblock In {\em "Analytic and Algebraic Geometry"}, pages 173--186. Springer,
  2017.

\bibitem{K2}
I~Kaur.
\newblock A pathological case of the ${C_1}$ conjecture in mixed
  characteristic.
\newblock In {\em Mathematical Proceedings of the Cambridge Philosophical
  Society}, pages 1--4. Cambridge University Press, 2018.

\bibitem{po}
J.~Le~Potier.
\newblock {\em Lectures on vector bundles}, volume~54.
\newblock Cambridge University Press, 1997.

\bibitem{QL}
Q.~Liu.
\newblock {\em Algebraic Geometry and arithmetic curves}.
\newblock Oxford University Press, Oxford Science Publications, Oxford, 2002.

\bibitem{nagsesh}
D.~S. Nagaraj and C.~S. Seshadri.
\newblock Degenerations of the moduli spaces of vector bundles on curves {II}
  ({G}eneralized {G}ieseker moduli spaces).
\newblock In {\em Proceedings of the Indian Academy of Sciences-Mathematical
  Sciences}, volume 109, pages 165--201. Springer, 1999.

\bibitem{sch}
A.~Schmitt.
\newblock The {H}ilbert compactification of the universal moduli space of
  semistable vector bundles over smooth curves.
\newblock {\em Journal of Differential Geometry}, 66(2):169--209, 2004.

\bibitem{Ser}
E.~Sernesi.
\newblock {\em Deformations of Algebraic Schemes}.
\newblock Grundlehren der mathematischen Wissenschaften-334. Springer-Verlag,
  2006.

\bibitem{SFV}
C.S. Seshadri.
\newblock {\em Fibres vectoriels sur les courbes algebriques}, volume 14023.
\newblock Ast{\'e}risque 96, Paris, 1982.

\bibitem{sun1}
X.~Sun.
\newblock Moduli spaces of {SL}(r)-bundles on singular irreducible curves.
\newblock {\em Asia Journal of Mathematics}, 7(4):609--625, 2003.

\bibitem{Tu}
L.~W Tu.
\newblock Semistable bundles over an elliptic curve.
\newblock {\em Advances in Mathematics}, 98(1):1--26, 1993.

\end{thebibliography}
\end{document}